\setlist[itemize,1]{leftmargin=\dimexpr 20pt} 
\DeclareRobustCommand{\textbf}[1]{%
  \begingroup
    \bfseries\sffamily #1%
  \endgroup
}
\renewenvironment{abstract}
  {\begin{center}\normalfont\sffamily\bfseries Abstract\end{center}\begin{quote}\rmfamily}
  {\end{quote}}
\def\BibTeX{{\rm B\kern-.05em{\sc i\kern-.025em b}\kern-.08em
    T\kern-.1667em\lower.7ex\hbox{E}\kern-.125emX}}
\definecolor{hotPink}{RGB}{255,105,180}     
\definecolor{coolTeal}{RGB}{0,160,170}      
\definecolor{goldenRod}{RGB}{255,180,0}     
\newcommand{\ie}{\textit{i.e\@.}}
\def\@IEEEsectpunct{.\ \,}
\def\paragraph{\@startsection{paragraph}{4}{\z@}{1.5ex plus 1.5ex minus 0.5ex}%
{0ex}{\normalfont\normalsize\itshape}}
\declaretheorem[style=definition]{theorem}
\declaretheorem[style=definition]{corollary}
\declaretheorem[style=definition]{lemma}
\declaretheorem[style=definition,qed=$\vartriangle$]{remark}
\declaretheorem[style=definition,qed=$\blacktriangle$]{example}
\declaretheorem[style=definition,numbered=no]{standing assumption}
\newcommand {\nn}{\nonumber}
\newcommand{\beq}{\begin{equation}}
\newcommand{\eeq}{\end{equation}}
\newcommand {\bseq}{\begin{subequations}}
\newcommand {\eseq}{\end{subequations}}
\newcommand {\bma}{\left[}
\newcommand {\ema}{\right]}
\newcommand{\bsm}{\left[\begin{smallmatrix}}
\newcommand{\esm}{\end{smallmatrix}\right]}
\newcommand {\N}{\mathbb{N}} 	
\newcommand {\Z}{\mathbb{Z}} 	
\newcommand {\R}{\mathbb{R}} 	
\newcommand {\T}{\mathbb{T}} 	
\newcommand {\W}{\mathbb{W}} 	
\newcommand {\B}{\mathcal{B}} 	
\newcommand {\A}{\mathcal{A}} 	
\renewcommand{\L}{\mathcal{L}} 
\newcommand{\syz}{\mathbf{syz}\,} 
\newcommand{\wo}{w_{\textup{o}}} 
\newcommand{\Ker}{\mathbf{ker}} 
\newcommand{\Image}{\mathbf{im}} 
\newcommand{\rank}{\mathbf{rank}} 
\newcommand{\transpose}{\mathsf{T}} 
\newcommand{\col}{\operatorname{col}}
\newcommand{\diag}{\operatorname{diag}}
\newacronym{PE}{PE}{\text{P}ersistence of  \text{E}xcitation}
\newacronym{LTI}{LTI}{\textbf{L}inear \textbf{T}ime-\textbf{I}nvariant}
\newacronym{ATI}{ATI}{\textbf{A}ffine \textbf{T}ime-\textbf{I}nvariant}
\newacronym{LQT}{LQT}{\textbf{L}inear \textbf{Q}uadratic \textbf{T}racking}
\newacronym{LQR}{LQR}{\textbf{L}inear \textbf{Q}uadratic \textbf{R}egulator}
\newacronym{LPV}{LPV}{\textbf{L}inear \textbf{P}arameter \textbf{V}arying}
\newacronym{DeePC}{DeePC}{\textbf{D}ata-\textbf{e}nabl\textbf{e}d \textbf{P}redictive \textbf{C}ontrol}
\begin{document}
\title{\textsf{\textbf{From Time Series to Affine Systems}}}

\author{
A.~Padoan\thanks{A.~Padoan is with the Department of Electrical and Computer Engineering, University of British Columbia, Vancouver, BC, Canada 
(e-mail: \texttt{alberto.padoan@ubc.ca}).}, %
J.~Eising\thanks{J.~Eising is with the Department of Information Technology and Electrical Engineering, ETH~Zürich, 8092~Zürich, Switzerland 
(e-mail: \texttt{jeising@control.ee.ethz.ch}).}, %
I.~Markovsky\thanks{I.~Markovsky is with the Catalan Institution for Research and Advanced Studies, 08010~Barcelona, Spain, 
and with the International Centre for Numerical Methods in Engineering, 08034~Barcelona, Spain 
(e-mail: \texttt{ivan.markovsky@cinme.upc.edu}).}%
}

\maketitle

\begingroup
\renewcommand\thefootnote{}
\footnotetext{
Manuscript submitted for review, October~2025. 
This work was supported in part by the Swiss National Science Foundation under the National Centre of Competence in Research (NCCR) on Dependable Ubiquitous Automation (Grant~No.~51NF40\_180640), 
and in part by the Natural Sciences and Engineering Research Council of Canada (NSERC) through a Discovery Grant (RGPIN-2025-04577).
}
\addtocounter{footnote}{-1}
\endgroup

\begin{abstract}%
\noindent
The paper extends core results of behavioral systems theory from linear to affine time-invariant systems. We characterize the  behavior of affine time-invariant systems via kernel, input–output, state-space, and finite-horizon data-driven representations,  demonstrating a range of structural parallels with linear time-invariant systems.  Building on these representations, we introduce a new persistence of excitation condition tailored to the model class of affine time-invariant systems. The condition yields a new fundamental lemma that parallels the classical result for linear systems, while provably reducing data requirements. Our analysis highlights that excitation conditions must be adapted to the model class: overlooking structural differences may lead to unnecessarily conservative data requirements.
\end{abstract}

\section{Introduction}

Data-driven control is a powerful paradigm for designing controllers directly from measured trajectories~\cite{markovsky2021behavioral}, without explicitly passing through the intermediate step of identifying a parametric model. This approach finds a natural foundation in behavioral systems theory~\cite{willems1986timea,willems1986timeb,willems1987timec}, which   models systems by their \textit{behavior},  i.e., the set of all its trajectories, rather than by a specific  representation. A key idea underpinning many developments in this area is that all trajectories of a \gls{LTI} system over a finite horizon can be parameterized by a data matrix built from a single sufficiently rich experiment. The richness condition is formalized by the classical notion of \emph{persistence of excitation}~\cite{soderstrom1989system,ljung1999system,VanOverschee1996subspace}, namely, the requirement that the Hankel matrix constructed from the input sequence has full row rank. A central result in this context is the \emph{fundamental lemma}\cite{willems2005note,vanWaarde2020willems}, which states that for a controllable \gls{LTI} system of order $n$, if the input is persistently exciting of order $n+L$, then every trajectory of length $L$ can be expressed as a linear combination of columns of the Hankel matrix built from the measured data. This result provides a bridge from data to trajectories, enabling finite-horizon representations of system dynamics using raw data alone, and forms the basis for a broad range of identification, prediction, filtering, and control algorithms; see\cite{markovsky2021behavioral} for a recent overview.
In the last decade, the fundamental lemma has fueled a resurgence of interest in \textit{direct} data-driven control frameworks~\cite{de2019formulas,vanWaarde2020willems,coulson2019data}, where the goal is to infer optimal decisions directly from measured data rather than through system identification.  Building on this principle, a rich class of data-driven \textit{predictive} control schemes has emerged~\cite{markovsky2021behavioral}. A prominent example is the \gls{DeePC} algorithm~\cite{coulson2019data}, which has been successfully applied in a variety of experimental settings, including aerial robotics~\cite{coulson2021distributionally}, synchronous motor drives~\cite{carlet2020data}, and grid-connected power converters~\cite{huang2019data}, among others.  Similar ideas have since been extended beyond the linear deterministic setting, leading to data-driven formulations of predictive control tailored to nonlinear systems~\cite{berberich2022linear,Lazar2024ECC_BasisFunctionsDeePC} and stochastic  systems~\cite{breschi2023datadriven,Faulwasser2023ARC}.  

Despite these successes, the scope of the fundamental lemma, and the data-driven methods that build upon it, crucially rely on a \textit{linearity} assumption on the underlying system. Yet, a range of recently developed data-driven control~\cite{berberich2022linear,martinelli2022data,vankan2025federated} and identification~\cite{Markovsky2025Affine} algorithms relax the linearity assumption and instead rely on \textit{affine} systems, whose behaviors are affine spaces of trajectories and therefore not captured by the fundamental lemma. Aside from arising in many applications, such as the error dynamics of a reference tracking problem, affine systems arise naturally also through \textit{linearization} a nonlinear system around an operating point that is not necessarily an equilibrium or when accounting for nonzero offsets in the dynamics~\cite{Markovsky2025Affine}. 

The difference is structural: the behavior of an \gls{LTI} system is a \textit{linear}, shift-invariant subspace, closed under linear combinations of trajectories. In contrast, an \gls{ATI} system yields an \textit{affine}, shift-invariant subspace, closed only under affine combinations. This subtle difference has concrete implications: the classical persistence of excitation condition, which ensures data informativity for \gls{LTI} systems via rank conditions on Hankel matrices, cannot be applied \textit{verbatim} to affine systems. The notion of excitation itself must be redefined to reflect the underlying model class.
This reveals a broader conceptual gap: the classical notion of persistence of excitation is \textit{not} a universal measure of richness but one intrinsically tailored to \gls{LTI} behaviors. It guarantees that a time series and its shifts span a linear trajectory subspace, yet ignores structural information present in other model classes.
Extensions  beyond \gls{LTI} systems thus require redefining excitation, so as to reflect the structure of the model class or the input-generation mechanism, a direction already explored in~\cite{Padoan2023DataDriven,Padoan2016CDC,Padoan2016NOLCOS,padoan2017geometric}.  For affine systems, one needs to design experiments that simultaneously excite the ``homogeneous'' dynamics and reveal a particular ``offset'' trajectory. Extensions beyond the linear case include robust and data-efficient formulations~\cite{Berberich2023ACC,CamlibelRapisarda2024}, data-driven approaches for \gls{LPV} and bilinear systems~\cite{Verhoek2023DirectLPV,lpv-fl,lpv-ident},  and  classes of nonlinear systems~\cite{Padoan2018ECC,Berberich2019TrajectoryFramework,Straesser2021BeyondPoly}.

In this paper, we make this structural perspective precise. We first develop a complete characterization of affine time-invariant systems, including their kernel, input–output, state-space, and data-driven representations, together with integer invariants and controllability properties. Building on this foundation, we introduce a new persistence of excitation condition tailored to the model class of \gls{ATI} systems and establish a corresponding fundamental lemma.
Our results provide (i) a complete behavioral characterization of affine representations, (ii) a structurally justified and strictly weaker excitation condition, and (iii) an extension of the fundamental lemma that reduces data requirements compared to existing alternatives in the literature. Finally, we illustrate the practical implications in the context of data-driven nonlinear predictive control, where local affine approximations naturally arise and our framework enables data-efficient system representations.

\textit{Related work.} The fundamental lemma has been originally stated using polynomial algebra~\cite{willems2005note} and later reformulated for state-space systems~\cite{vanWaarde2020willems}. It has enabled a broad range of direct data-driven frameworks, including explicit solutions to the \gls{LQT} problem~\cite{markovsky2008data}, Lyapunov-based control design~\cite{de2019formulas}, and methods relying on quadratic inequalities~\cite{vanWaarde2023qmi}. In parallel, a rich class of data-driven predictive control schemes has emerged~\cite{markovsky2021behavioral}, with extensions to both deterministic~\cite{berberich2022linear,coulson2019data} and stochastic settings~\cite{breschi2023datadriven}; see~\cite{markovsky2021behavioral} for an overview.  
For affine systems, first extensions of the fundamental lemma have been developed in~\cite{martinelli2022data,berberich2022linear}. The results in~\cite{martinelli2022data,berberich2022linear} show that controllability, together with a classical persistence of excitation assumption, ensures that the finite-horizon behavior of an \gls{ATI} system can be parameterized by the column space of a Hankel matrix constructed from a trajectory of the system and augmented with an additional bottom row of ones. A subsequent contribution~\cite{Padoan2023DataDriven} has later established that finite-horizon affine behaviors can be characterized under a generalized excitation condition, without requiring controllability. The representation determined by the fundamental lemma for \gls{ATI} systems has found applications in data-driven nonlinear control~\cite{berberich2022linear,vankan2025federated}, where predictive control algorithms rely on local affine approximations of the underlying nonlinear system to perform tracking tasks. The same representation has also been used in system identification~\cite{Markovsky2025Affine}, where exploiting the structure of affine models yields algorithms that outperform the classical two-step method based on data centering~\cite{ljung1999system}. 
To the best of our knowledge, the works most closely related to the present paper are~\cite{martinelli2022data,berberich2022linear,Padoan2023DataDriven}. In this work, we demonstrate that the persistence of excitation conditions given therein are stronger than necessary, which translates into unnecessarily demanding data requirements to ensure the fundamental lemma holds for \gls{ATI} systems.

\textit{Contributions.} 
We develop a complete behavioral characterization of \gls{ATI} systems, motivated by excessive data requirements of existing data-driven control algorithms that rely on the classical notion of persistence of excitation. First, we establish new representation theorems for discrete-time \gls{ATI} systems, showing that every shift-invariant affine behavior admits a family of equivalent  representations. These results form the foundation for analyzing controllability of \gls{ATI} systems and for defining their integer invariants. Second, we introduce a new persistence of excitation condition tailored to the model class of \gls{ATI} systems. Building on ideas presented in~\cite{Padoan2023DataDriven}, we then develop a generalization of the fundamental lemma for \gls{ATI} systems~\cite{vanWaarde2020willems}, which leverages a rank conditions under provably weaker assumptions and reduced data requirements compared to~\cite{martinelli2022data,berberich2022linear,Padoan2023DataDriven}. Finally, we establish a consistency result showing that, in contrast to the \gls{LTI} case, some affine representations may define empty trajectory sets. In summary, our main contributions are threefold: (i) a complete characterization of \gls{ATI} system representations; (ii) a new notion of persistence of excitation condition; and (iii) a generalization of the fundamental lemma with reduced data requirements.

\textit{Paper organization.} Section~\ref{sec:motivation} provides a motivating application arising in predictive control. Sections \ref{sec:affine_systems} and~\ref{sec:props} introduce the necessary background on behavioral systems theory and characterizes affine systems using this framework. Section~\ref{sec:fundamental_lemma_ATI} presents a new persistence of excitation condition tailored to \gls{ATI} systems and establishes a generalization of the fundamental lemma. Section~\ref{sec:discussion} highlights differences with the classical persistence of excitation, detailing the corresponding data requirements and geometric implications. Section~\ref{sec:example} illustrates our results with a worked-out example. Section~\ref{sec:conclusion} concludes the paper with a summary and an outlook on future research directions.  
The appendix contains all proofs and a digression on equivalence and consistency of kernel representations.

\textit{Notation and terminology.}
The sets of real,  complex,  integer   and positive integer numbers are denoted by  $\R$,  $\mathbb{C}$,
 $\Z$,    and $\N$, respectively.  For ${T\in\N}$, the set of integer numbers $\{1, 2, \dots , T\}$ is denoted by $\mathbf{T}$.  
A function $f$ from $X$ to $Y$ is denoted by ${f:X \to Y}$; $(Y)^{X}$ denotes the set of all such maps. 
The \textit{restriction} of ${f:X \to Y}$ to a set ${X^{\prime}}$, with ${X^{\prime} \cap X \neq \emptyset}$, is denoted by $f|_{X^{\prime}}$ and is defined by $f|_{X^{\prime}}(x)$ for ${x \in X^{\prime}}$; if ${\mathcal{F} \subseteq (Y)^{X}}$, then $\mathcal{F}|_{X^{\prime}}$ is defined as ${\{ f|_{X^{\prime}} \, : \, f \in \mathcal{F}\}}$. 
 A {sequence} is a function \( w \) defined on a non-empty subset  \(\T \subseteq \Z\), often denoted by \( \{w_t\}_{t \in \T} \). The sequence \( w \) is \emph{infinite} if \( \T \) has infinite cardinality and \emph{finite} otherwise, in which case it is often identified with the column vector  ${w = \col(w(1), \ldots, w(|\T|))}$. The \textit{length} of a finite sequence is the cardinality of its domain. The shift operator ${\sigma}$ maps a sequence $w$ with domain \( \T \) to the sequence $\sigma w$ defined as ${(\sigma w) (t)= w (t+1)}$ for all $t\in\T$; if ${\mathcal{W}}$ is a family of sequences, then ${\sigma \mathcal{W} = \{\sigma w \, | \,  w\in\mathcal{W} \}}$.   The transpose, rank, image, and kernel of a matrix ${M\in\R^{m\times n}}$ are denoted by $M^{\top}$, $\rank\, M$, $\Image\, M$, and $\Ker M$, respectively. The $m \times p$ zero matrix is denoted by $0_{m \times p}$ and the $m \times p$ matrix with ones on the main diagonal and zeros elsewhere is denoted by $I_{m \times p}$. For $m = p$, we use $0_m$ and $I_m$. The column vector in $\R^m$ with all entries equal to one is denoted by $\mathbb{1}_m$. Subscripts are omitted if dimensions are clear from context. $\R^{m\times n}[\xi]$ is the set of $m \times n$ matrices with polynomial entries indeterminate $\xi$ and real coefficients. If $R\in\R^{m\times n}[\xi]$ is a polynomial matrix, then $\deg(R)$ denotes its degree.

\section{\!\! A motivating application in data-driven control}  \label{sec:motivation}

Consider a discrete-time system described by the  equations   
\beq \label{eq:system_nonlinear}
\sigma x = f(x, u), 
\quad
y = h(x, u),     
\eeq
\noindent
where $x(t) \in \mathbb{R}^n$, $u(t) \in \mathbb{R}^m$, and $y(t) \in \mathbb{R}^p$ denote the state, input, and output of the system at time $t \in \mathbb{Z}$, respectively.  Given $\bar{x} \in \mathbb{R}^n, $ $ \bar{u} \in \mathbb{R}^m,$ $ \bar{y} \in \mathbb{R}^p$ and assuming that $f$ and $h$ are differentiable at $(\bar{x}, \bar{u})$, the \textit{linearized system} associated with~\eqref{eq:system_nonlinear} around the point $(\bar{x}, \bar{u}, \bar{y})$ is defined as
\beq \label{eq:system_nonlinear_linearization_affine}
\sigma \xi = A \xi + B v + E, 
\quad
z = C \xi + D v + F,
\eeq
where $\xi(t) \in \mathbb{R}^n$, $v(t) \in \mathbb{R}^m$, and $z(t) \in \mathbb{R}^p$ are deviations from the operating points $\bar{x},$ $ \bar{u}, $ and $\bar{y}$, defined as
$$
\xi(t) := x(t) - \bar{x}, \quad v(t) := u(t) - \bar{u}, \quad z(t) := y(t) - \bar{y}, 
$$
and $A\in\R^{n\times n}$, $B\in\R^{n\times m}$, $C\in\R^{p\times n}$, $D\in\R^{p\times m}$, $E\in\R^{n}$, and $F\in\R^{p}$ are the system matrices, defined as
\begin{subequations} \label{eq:system_nonlinear_linearization_affine_matrices}
\begin{align}
    A &:= \frac{\partial f}{\partial x} \bigg|_{(\bar{x}, \bar{u})}, \quad 
    B := \frac{\partial f}{\partial u} \bigg|_{(\bar{x}, \bar{u})}, \quad 
    E := f(\bar{x}, \bar{u}) - \bar{x}, \\
    C &:= \frac{\partial h}{\partial x} \bigg|_{(\bar{x}, \bar{u})}, \quad 
    D := \frac{\partial h}{\partial u} \bigg|_{(\bar{x}, \bar{u})}, \quad 
    F := h(\bar{x}, \bar{u}) - \bar{y}.
\end{align}
\end{subequations}
\noindent
By construction, the linearized system~\eqref{eq:system_nonlinear_linearization_affine} is \textit{affine} and \textit{time-invariant}, i.e., defined by affine functions of the state and input variables with coefficients that do not depend on time. In the special case where $(\bar{x}, \bar{u}, \bar{y})$ is an equilibrium point of the original system~\eqref{eq:system_nonlinear}, i.e., $f(\bar{x}, \bar{u}) = \bar{x}$, and $h(\bar{x}, \bar{u}) = \bar{y}$, the offsets $e$ and $r$ are identically zero, and the linearized system~\eqref{eq:system_nonlinear_linearization_affine} reduces to a \textit{linear} system described by the equations
\[ 
\sigma \xi = A \xi + B v, 
\quad
z = C \xi + D v.
\]
\noindent Linearization has long served as a foundational tool in nonlinear systems theory~\cite{khalil1996nonlinear,slotine1991applied,isidori1995nonlinear}, underpinning a broad range of analysis and design methods that rely on models derived from first principles or through system identification~\cite{ljung1999system}. In many applications, however, obtaining such models is not always possible or even desirable. The system may be too complex to model from first principles, or data for system identification may be inadequate. Recently, these limitations have motivated growing interest in non-parametric system representations~\cite{de2019formulas,vanWaarde2020willems,coulson2019data}, particularly in the context of predictive control~\cite{coulson2019data,berberich2022linear}. Rather than obtaining classical parametric representations, such as state-space or transfer function representations, the idea is to characterize finite-horizon system behaviors using raw data matrices~\cite{coulson2019data,berberich2022linear}, without \textit{a priori} complexity constraints.  

The recent papers~\cite{berberich2022linear,vankan2025federated} build on this idea by proposing a linear tracking predictive control scheme for nonlinear systems. For simplicity, we focus on the formulation presented in~\cite{berberich2022linear}. The method hinges on two key ingredients: first, linearizing the nonlinear system~\eqref{eq:system_nonlinear} around a given trajectory; second, representing the finite-horizon behavior of resulting affine system~\eqref{eq:system_nonlinear_linearization_affine}-\eqref{eq:system_nonlinear_linearization_affine_matrices}  using raw data matrices.   To formalize this idea, we recall~\cite[Theorem~1]{berberich2022linear}. 

Let ${T} \in \N$. The \textit{Hankel matrix} of depth ${L\in\mathbf{T}}$ associated with the sequence ${u\in {(\R^m)}^\mathbf{T}}$ is defined as 
\beq \label{eq:Hankel} 
\! H_{L}(u) \! = \!
\scalebox{1}{$
\bma  \nn
\begin{array}{ccccc}
u(1) & u(2)  & \cdots &  u(T-L+1)   \\
u(2) & u(3)  & \cdots &   u(T-L+2)   \\
\vdots  & \vdots  & \ddots & \vdots  \\
u(L) & u(L+1)  & \cdots  & u(T)
\end{array}
\ema
$} . \! \!
\eeq

\begin{theorem}~\cite[Theorem~1]{berberich2022linear} \label{thm:berberich22}
Consider the system~\eqref{eq:system_nonlinear_linearization_affine}. Let 
\(u_d \in (\R^m)^{\mathbf{T}}\),
\(x_d \in (\R^n)^{\mathbf{T}}\),
\(y_d \in (\R^p)^{\mathbf{T}}\)  be an input/state/output trajectory of system~\eqref{eq:system_nonlinear_linearization_affine} and let ${L\in\mathbf{T}}$. Assume 
\beq \label{eq:rank_condition_affine}
    \rank \, 
    \scalebox{1}{$
    \bma  
    \begin{array}{c}
    H_1(x_d) \\
    H_L(u_d) \\ 
    \mathbb{1}^{\transpose} 
    \end{array}
    \ema = mL + n + 1$} .
\eeq
Then every trajectory $\left[\begin{smallmatrix} u \\ y\end{smallmatrix}\right] \in \R^{(m+p)\mathbf{L}}$ of system~\eqref{eq:system_nonlinear_linearization_affine} of length \(L\) can be expressed as
    \beq \label{eq:image_condition}
    \begin{bmatrix}
    u \\
    y
    \end{bmatrix}
    =
    \begin{bmatrix}
    {H}_L(u_{d}) \\
    {H}_L(y_{d})
    \end{bmatrix} g,
    \eeq
for some \({g \in \R^{T-L+1}}\) such that ${\mathbb{1}^{\transpose} g = 1}$. 
\end{theorem}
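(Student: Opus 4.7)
The plan is to reduce the statement to two standard observations: first, a surjectivity argument, enabled by the rank condition, to produce a candidate vector $g$; second, a superposition-like argument adapted to the affine setting, exploiting the constraint $\mathbb{1}^{\transpose} g = 1$ to ensure that affine combinations of trajectories are again trajectories.

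First, I would fix an arbitrary trajectory $(u,x,y)\in\R^{(m+n+p)\mathbf{L}}$ of system~\eqref{eq:system_nonlinear_linearization_affine}, and in particular its initial state $x(1)\in\R^n$ and input sequence $u\in\R^{m\mathbf{L}}$. Reading the rank condition~\eqref{eq:rank_condition_affine} as a statement of full row rank of the stacked matrix
\[
M := \begin{bmatrix} H_1(x_d) \\ H_L(u_d) \\ \mathbb{1}^{\transpose} \end{bmatrix} \in \R^{(n+mL+1)\times(T-L+1)},
\]
I obtain that $M$ is surjective, so there exists $g\in\R^{T-L+1}$ such that $H_1(x_d)g = x(1)$, $H_L(u_d)g = u$, and $\mathbb{1}^{\transpose}g = 1$. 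This handles the matching of initial state, input, and the affine normalization constraint in one stroke.

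Next, I would show that with this $g$, the triple $(H_L(u_d)g,\,H_L(x_d)g,\,H_L(y_d)g)$ is itself a trajectory of system~\eqref{eq:system_nonlinear_linearization_affine}. Viewing each column of the Hankel matrices as a length-$L$ sub-trajectory $(u_d^{(i)},x_d^{(i)},y_d^{(i)})$ of the affine system, each satisfies
\[
\sigma x_d^{(i)} = A x_d^{(i)} + B u_d^{(i)} + E\mathbb{1},\quad y_d^{(i)} = C x_d^{(i)} + D u_d^{(i)} + F\mathbb{1}.
\]
Taking the combination $\sum_i g_i (\cdot)$ and using linearity of $\sigma$, $A$, $B$, $C$, $D$, I get
\[
\sigma\Big(\sum_i g_i x_d^{(i)}\Big) = A\sum_i g_i x_d^{(i)} + B\sum_i g_i u_d^{(i)} + \Big(\sum_i g_i\Big)E\mathbb{1},
\]
and similarly for the output equation. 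The offsets $E$ and $F$ would, in general, \emph{not} combine correctly, but the constraint $\mathbb{1}^{\transpose}g = 1$ is precisely what forces $\sum_i g_i = 1$ and hence recovers the affine dynamics for the combined sequences. This is the only place where the structural difference between linear and affine behaviors appears, and it is the step I expect to be the main conceptual obstacle; the bottom row of ones in~\eqref{eq:rank_condition_affine} is exactly the algebraic counterpart of this affine-combination requirement.

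Finally, I would invoke determinism: the affine state-space system~\eqref{eq:system_nonlinear_linearization_affine} is forward-deterministic, so any input/state/output trajectory of length $L$ is uniquely determined by its initial state and input sequence. Since the trajectory obtained by the combination $(H_L(u_d)g, H_L(x_d)g, H_L(y_d)g)$ shares the initial state $x(1)$ and the input $u$ with $(u,x,y)$, the two must coincide on $\mathbf{L}$. In particular, $y = H_L(y_d)g$, which together with $u = H_L(u_d)g$ yields the desired image representation~\eqref{eq:image_condition}, completing the proof.
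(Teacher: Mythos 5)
Your proposal is correct and matches, in substance, the argument the paper gives for the analogous statement (part~(ii) of Theorem~\ref{thm:fundamental_lemma_state-space_ATI_systems}; Theorem~\ref{thm:berberich22} itself is only cited, not reproved): both use the rank condition to find $g$ with $H_1(x_d)g=x(1)$, $H_L(u_d)g=u$, $\mathbb{1}^{\transpose}g=1$, and both rest on the fact that the columns of the data Hankel matrices are trajectories whose affine combination (weights summing to one, which is exactly what neutralizes the offsets $E$ and $F$) is again a trajectory with the prescribed initial state and input. The only cosmetic difference is that you close the argument via forward determinism of the state-space recursion, whereas the paper writes the same fact as an explicit solution formula $\bigl[\begin{smallmatrix} u\\ y\end{smallmatrix}\bigr]=\mathsf{M}_L\bigl[\begin{smallmatrix} x(1)\\ u\\ 1\end{smallmatrix}\bigr]$ applied to both the test trajectory and the data.
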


\noindent
Theorem~\ref{thm:berberich22} hinges on the rank condition~\eqref{eq:rank_condition_affine}, which ensures that the data matrices 
${H_1(x_d)}$ and ${H_L(u_d)}$ contain sufficiently rich information to parameterize all trajectories of the affine system~\eqref{eq:system_nonlinear_linearization_affine} over a finite horizon. In~\cite{berberich2022linear}, the rank condition~\eqref{eq:rank_condition_affine} is enforced via a data-dependent result established in~\cite[Theorem~1]{martinelli2022data}, referred to therein as the \textit{fundamental lemma for affine systems}. The result generalizes the fundamental lemma for \gls{LTI} systems~\cite{vanWaarde2020willems}, similarly requiring that the input $u_d$ used to collect the data be persistently exciting of a sufficiently high order. Specifically, under the assumption that the system is controllable and that the input $u_d$ is persistently exciting of order $n + L + 1$, the rank condition~\eqref{eq:rank_condition_affine} is guaranteed to hold.  For completeness, we recall the classical definition of \textit{persistence of excitation} and the relevant result next.

The sequence ${u \in (\mathbb{R}^m)^{\mathbf{T}}}$ is said to be \emph{persistently exciting of order ${L\in\mathbf{T}}$} if 
\beq 
\rank \, H_L(u) = mL. \label{eq:persistence_of_excitation_linear}
\eeq

\begin{theorem} ~\cite[Theorem~1]{martinelli2022data} \label{thm:martinelli22}
Consider the system~\eqref{eq:system_nonlinear_linearization_affine} and assume that the pair $(A, B)$ is controllable. Let 
\(u_d \in (\R^m)^{\mathbf{T}}\),
\(x_d \in (\R^n)^{\mathbf{T}}\),
\(y_d \in (\R^p)^{\mathbf{T}}\)  be an input/state/output trajectory of  system~\eqref{eq:system_nonlinear_linearization_affine}. Assume that the input $u_d$ is persistently exciting of order \(n + L+1\). 
Then the rank condition~\eqref{eq:rank_condition_affine} holds.
\end{theorem}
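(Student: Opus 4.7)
The plan is to reduce the affine rank condition to the classical homogeneous fundamental lemma by a finite-difference trick. Introduce the backward shifts $\Delta u(t) := u_d(t) - u_d(t-1)$ and $\Delta x(t) := x_d(t) - x_d(t-1)$ for $t = 2, \dots, T$. Substituting $\sigma x = Ax + Bu + E$ at two consecutive instants and subtracting, the constant offset $E$ cancels, so $\sigma \Delta x = A \Delta x + B \Delta u$ is a genuine trajectory of the controllable linear pair $(A, B)$. I would then apply to the matrix in~\eqref{eq:rank_condition_affine} the invertible column operation that replaces column $j$ by column $j$ minus column $j-1$ for every $j \geq 2$. This preserves the rank and yields a matrix whose first column is $c = \col(x_d(1), u_d(1), \dots, u_d(L), 1)$ and whose remaining columns are $\col(\Delta x(j), \Delta u(j), \dots, \Delta u(j+L-1), 0)$; in particular, the row of ones has become a row that vanishes except at the pivot column $c$.

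Next, I would link the hypothesis to a classical persistence of excitation for the differences. Row-reducing $H_{n+L+1}(u_d)$ by replacing each block row with its difference with the block row above is an invertible row operation; since $u_d$ is PE of order $n+L+1$, a dimension count on the resulting matrix---whose top block row is $u_d(\cdot)$ and whose bottom $n+L$ block rows form $H_{n+L}(\Delta u)$---forces $\rank H_{n+L}(\Delta u) = m(n+L)$, that is, $\Delta u$ is PE of order $n+L$. The classical fundamental lemma~\cite{vanWaarde2020willems} applied to the controllable pair $(A,B)$ driven by $\Delta u$ then delivers
\[
\rank \begin{bmatrix} H_1(\Delta x) \\ H_L(\Delta u) \end{bmatrix} = n + mL.
\]

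Finally, I would assemble the pieces. Because the bottom row of the right-hand block in the transformed matrix is zero while the last entry of $c$ equals one, the last row is linearly independent from the rest; using it to annihilate the first entry of every other row in the pivot column leaves a residual block equal to $\begin{bmatrix} H_1(\Delta x) \\ H_L(\Delta u) \end{bmatrix}$ of rank $n + mL$. Adding the contribution of the retained last row yields rank $(n+mL) + 1 = mL + n + 1$, matching the claim. The main obstacle is bookkeeping: one must align the shifted Hankel column indices carefully and verify invertibility of each row and column operation used. The analytical content is compressed into the one-line observation that differencing kills the offset $E$, together with the fact that the ``$+1$'' in the prescribed PE order exactly accounts for the difference sample consumed by the finite-difference operator.
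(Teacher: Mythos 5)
Your proof is correct, but it takes a genuinely different route from the paper. The paper never proves Theorem~\ref{thm:martinelli22} directly: it obtains it as a consequence of the sharper Theorem~\ref{thm:fundamental_lemma_state-space_ATI_systems}(i) combined with Lemma~\ref{lemma:PE-relationship}, and the proof of that sharper statement is a direct contradiction argument in the style of van Waarde et al.: assume a nonzero left annihilator $(\nu,\eta,\epsilon)$ of the matrix in~\eqref{eq:rank_condition_affine}, propagate it through the affine dynamics to an annihilator of $\bigl[\begin{smallmatrix} H_{n+L}(u_d) \\ \mathbb{1}^{\transpose}\end{smallmatrix}\bigr]$, invoke Cayley--Hamilton, and peel off $\eta=0$, $\nu B = \nu AB = \cdots = 0$, hence $\nu=0$ and $\epsilon=0$ by controllability. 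You instead reduce to the classical linear fundamental lemma by first differencing: the column operation turns the row of ones into a pivot and the remaining columns into data of the offset-free pair $(A,B)$ driven by $\Delta u$, the block-row differencing of $H_{n+L+1}(u_d)$ correctly yields $\rank H_{n+L}(\Delta u)=m(n+L)$ (full row rank of the transformed matrix forces full row rank of any subset of its rows), and the final rank count $(n+mL)+1$ is sound. Your argument is shorter and modular, using the linear fundamental lemma as a black box, and it cleanly explains the ``$+1$'' in the prescribed excitation order as the sample consumed by the difference operator. What it does not buy is the paper's main point: because one order of excitation is spent on the differencing, the argument as stated cannot establish the rank condition under the weaker affine persistence of excitation of order $n+L$ in~\eqref{eq:persistence_of_excitation_affine}, which is exactly the improvement Theorem~\ref{thm:fundamental_lemma_state-space_ATI_systems} delivers. (It is worth noting that your column-differencing step applied to $\bigl[\begin{smallmatrix} H_{n+L}(u_d) \\ \mathbb{1}^{\transpose}\end{smallmatrix}\bigr]$ would in fact also extract $\rank H_{n+L}(\Delta u)=m(n+L)$ from the affine condition alone, so the reduction could be pushed further; as written, though, it proves only the cited, more conservative statement.)
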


Theorem~\ref{thm:martinelli22} builds on assumptions analogous to those used for \gls{LTI} systems in~\cite{vanWaarde2020willems} to ensure that the finite-horizon behavior of an \gls{ATI} system is characterized by the affine image of a raw data matrix. However, as we will argue in this paper, the persistence of excitation condition required by Theorem~\ref{thm:martinelli22} is stronger than necessary and, in practice, it translates to needing longer experiments or richer input signals than required.  One aim of this paper is to provide sharper conditions that reduce data requirements. Another is to clarify what representations an affine systems admit and to what extent the resulting data matrices retain information about the system. 

\section{Affine systems} \label{sec:affine_systems}

Behavioral systems theory models systems as sets of trajectories~\cite{willems1986timea,willems1986timeb,willems1987timec}. Given a \textit{time set}~${\T}$ and a \textit{signal set}~$\W$, a 
\textit{trajectory} is any function ${w: \T \to \W}$ and
a \textit{system} is defined by its \textit{behavior} $\B$, \textit{i.e.}, a subset of the set of all possible trajectories $(\W)^{\T}$.  
Formally, a \textit{system} is defined as a triple ${\Sigma=(\T,\W,\B)}$ where $\T$ is the \textit{time set}, $\W$ is the \textit{signal set}, and $\B \subseteq (\W)^{\T}$ is the \textit{behavior} of the system. In this spirit, a \textit{model} of a system is given by its behavior, and a \textit{model class}  $\mathcal{M}$ is a family of subsets of the set of possible trajectories  $ (\W)^{\T}$.  Throughout this work, we exclusively focus on \textit{discrete-time} systems, with ${\T = \Z}$
and ${\W = \R^q}$,  and adapt definitions accordingly. Thus, we use the terms \textit{sequence}, \textit{time series}, and \textit{trajectory} interchangeably. By convention, we also identify systems with their behaviors and use  ``model,'' ``system,'' and ``behavior'' synonymously. 

A system is \textit{affine} if its behavior $\B$ is an affine set, \ie, 
$$ v,w \in\B \text{ and } \alpha\in\R ~ \Rightarrow ~ \alpha v+(1-\alpha)w\in\B. $$
A system $\B$ is \textit{linear} if its behavior $\B$ is a linear subspace, \ie, 
$$ 0 \in \B, \quad v,w \in\B \text{ and } \alpha,\beta\in\R ~ \Rightarrow  ~ \alpha v+\beta w\in\B. $$
Clearly, any linear system is affine. A system is \textit{time-invariant} if its behavior $\B$ is shift-invariant, \textit{i.e.}, ${\sigma \B = \B}$, and \textit{complete} if  ${w\in\B}$ if and only if ${w|_{[t_0,t_1]} \in \B|_{[t_0,t_1]}}$ for all ${t_0,t_1 \in \Z}$ such that ${t_0 < t_1}$. 
Completeness of a linear system is equivalent to its behavior being closed in the topology of pointwise convergence~\cite[Proposition 4]{willems1986timea}. 
The model class of all complete, \gls{LTI} and \gls{ATI} systems, with ${\T = \Z}$ and ${\W = \R^q}$, are denoted by $\L^q$ and $\A^q$, respectively. We also omit the superscript $q$ when its value is immaterial or unspecified. Identifying a system with its behavior, we often write $\B \in \L$ and $\B \in \A$.  Clearly, any \gls{LTI} system is \gls{ATI},  but not \textit{vice versa}, that is, $\mathcal{L} \subset \mathcal{A}$.

The model classes of \gls{LTI} and \gls{ATI} systems are closely related, suggesting that many properties of \gls{LTI} systems extend naturally to \gls{ATI} ones. However, we show that accounting for structural differences occasionally reveals  unexpected insights, such as weaker conditions in certain results. We show that \gls{ATI} systems inherit key similarities with \gls{LTI} systems — system representations, input–output partitions, controllability, and integer invariants — but also important differences, including the need to for conditions ensuring that certain representations do give rise to non-empty sets of trajectories and the opportunity to refine the classical persistence of excitation condition for \gls{ATI} systems to yield sharper, less conservative data requirements.

\subsection{Offset representations} 

An affine system $\mathcal{B}$ naturally has an associated linear system, which we call the \textit{difference system}, defined as 
\[\mathcal{B}_\textup{dif}:= \mathcal{B}-\mathcal{B}  \]
where $A - B$ denotes the elementwise subtraction of sets $A$ and $B$, defined as
$${A - B := \{ a - b \mid a \in A,\, b \in B \}.}$$
Analogously, the elementwise addition of sets $A$ and $B$ is defined as
$$
A + B := \{\, a+b \mid a \in A,\; b \in B \,\}.
$$
Any affine system can be viewed as a translation of a linear subspace, so it admits the following representation, presented previously as \cite[Theorem~1]{Markovsky2025Affine}.

\begin{theorem}[Offset representations] \label{thm:lin+offset} 
Consider a  non-empty  system $\mathcal{B} \in \mathcal{A}$. Then $\mathcal{B}_\textup{dif} \in \mathcal{L}$ and, for any $\wo\in\mathcal{B}$,
$$\mathcal{B}= \mathcal{B}_\textup{dif} + \{ \wo\} .$$ 
\end{theorem}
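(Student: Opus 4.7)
The plan is to exploit non-emptiness of $\B$ by fixing an arbitrary $\wo \in \B$ and working with the translate $V := \{w - \wo \mid w \in \B\}$. Since $\B = V + \{\wo\}$ holds by construction, both claims of the theorem will follow at once if I can show (a) $V$ is a linear, shift-invariant, complete subspace of $(\R^q)^\Z$, i.e., $V \in \L$, and (b) $V$ coincides with $\B_\textup{dif}$.

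The first task is to verify that $V$ is a linear subspace. Non-emptiness ($0 \in V$) is immediate. Closure under scalar multiplication follows from a single application of affineness: for $v = w - \wo \in V$ and $\alpha \in \R$, the combination $w' = \alpha w + (1-\alpha)\wo$ lies in $\B$ and satisfies $w' - \wo = \alpha v$. The main obstacle of the whole proof is closure under addition, where I would need to use affineness twice: first form the midpoint $w_{12} = \tfrac{1}{2}(w_1 + w_2) \in \B$, then apply affineness again to $w_{12}$ and $\wo$ with coefficient $2$, obtaining $2 w_{12} + (1-2)\wo = w_1 + w_2 - \wo \in \B$. Consequently $(w_1 + w_2 - \wo) - \wo = v_1 + v_2 \in V$. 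This iterated affine combination is the only step that uses affineness in a non-trivial way; the rest of the argument is bookkeeping.

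Shift-invariance of $V$ then follows directly from shift-invariance of $\B$ and the linearity of $\sigma$, since $\sigma(w - \wo) = \sigma w - \sigma \wo$ with both shifted sequences in $\B$. For completeness, I would argue that if $v|_{[t_0,t_1]} \in V|_{[t_0,t_1]}$ on every finite window, then $(v + \wo)|_{[t_0,t_1]} \in \B|_{[t_0,t_1]}$ on every such window, so completeness of $\B$ forces $v + \wo \in \B$ and hence $v \in V$. Finally, the inclusion $V \subseteq \B_\textup{dif}$ is by definition, and the reverse inclusion comes from rewriting any $w_1 - w_2 \in \B_\textup{dif}$ as $(w_1 - \wo) - (w_2 - \wo)$ and invoking the linearity of $V$ just established. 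This yields $V = \B_\textup{dif}$, proving simultaneously that $\B_\textup{dif} \in \L$ and $\B = \B_\textup{dif} + \{\wo\}$; as a byproduct, the identification also confirms that $\B - \{\wo\}$ is in fact independent of the choice of $\wo \in \B$.
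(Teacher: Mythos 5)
Your proof is correct. Note that the paper does not actually prove this statement itself---it imports it from \cite[Theorem~1]{Markovsky2025Affine}---so there is no in-paper argument to compare against; your write-up supplies exactly the standard argument one would expect. The key step is the one you identify: showing that the translate $V=\B-\{\wo\}$ is closed under addition via the two-stage affine combination (midpoint, then dilation by $2$ about $\wo$), after which $V=\B_\textup{dif}$ and the independence of the translate from the choice of $\wo$ follow. One small ordering quibble: as written, your shift-invariance step only places $\sigma v=\sigma w-\sigma\wo$ in $\B-\B$, whereas you need it in $V$; this is closed either by rewriting $\sigma v=(\sigma w-\wo)-(\sigma\wo-\wo)$ and invoking the linearity of $V$ you have already established, or by deferring shift-invariance until after the identification $V=\B_\textup{dif}$ (where $\sigma\B_\textup{dif}=\sigma\B-\sigma\B=\B-\B$ is immediate). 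This is a presentational fix, not a gap.
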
 

\noindent
Theorem~\ref{thm:lin+offset} reveals a fundamental link between affine and linear systems. It enables the use of all system representations of linear systems, such as kernel representations, state-space realizations, and transfer functions, for the associated linear behavior $\mathcal{B}_\textup{dif}$. The affine system $\mathcal{B}$ may be defined by adding any trajectory ${\wo \in \mathcal{B}}$, which we refer to as an \textit{offset trajectory}. The case in which the offset trajectory $\wo$ can be chosen as a constant has been studied in the context of system identification~\cite{Markovsky2025Affine}. 
 
\subsection{Kernel representations} 

A classical result in behavioral system theory establishes that every complete \gls{LTI} system ${\B \in \L^q}$ admits a \textit{kernel representation}~\cite[Theorem 5]{willems1986timea}, that is, 
\beq \label{eq:kernel_representation_linear}
\B = \Ker \, R(\sigma) := \left\{w \,\mid \, R(\sigma)w = 0\right\} ,
\eeq
where  $R(\sigma)$ is the operator defined by the polynomial matrix 
$$R(\xi) = R_0 +R_1 \xi +\cdots+ R_{d}\xi^{d},$$ 
with $R_i\in\R^{g\times q}$  for any integer $i\in [0,d] $.  Without loss of generality, one may also assume  that $\Ker \, R(\sigma)$  is a \emph{minimal}  kernel representation of $\B$,  \textit{i.e.},  $R(\xi)$ has full row rank~\cite{willems1989models}.

Given a system $ {\mathcal{B} \in \mathcal{A}}$, our goal in this section is to extend classical kernel representations from the linear to the affine case. One naturally anticipates a close relationship between representations of the system $\mathcal{B}$ and those of its difference system $\mathcal{B}_{\mathrm{dif}}$, but how to make this relationship explicit is more subtle. A naive approach might attempt to construct representations of the system $\mathcal{B}$ by first representing its difference system $\mathcal{B}_\textup{dif}$, and then recovering a representation of the system $\mathcal{B}$ via the offset representation ${\mathcal{B} = \mathcal{B}_\textup{dif} + \{ \wo\}},$ with ${\wo \in \mathcal{B}}$, using Theorem~\ref{thm:lin+offset}.
This strategy, however, faces two fundamental obstacles. First, an affine system need not admit a finitely parametrized offset trajectory, such as a constant trajectory $\wo$ (see Example~\ref{ex:simple ex}). Second, even if such a trajectory exists, it is not clear how to determine it in practice.
 
\begin{example}[An \gls{ATI} system without a constant offset trajectory] \label{ex:simple ex}
    Consider the scalar \gls{ATI} system 
	\[\B = \{ w \mid \sigma w = w+1\}.\] 
Any trajectory of the behavior $\B$ is given as $\wo(t)=a+t$ for some $a\in\R$. Thus, although the defining equation ${\sigma w = w + 1}$ involves a constant offset, the behavior $\B$ has no constant trajectory.
\end{example}

\noindent
The following theorem clarifies the link between kernel representations of difference systems and their counterparts for \gls{ATI} systems. Moreover, it shows that the situation illustrated in Example~\ref{ex:simple ex} is general: \textit{every} \gls{ATI} system admits a representation defined by constant coefficients of the form
$$ \Ker_c \, R(\sigma) := \{ w \mid R(\sigma) w = c \} . $$

\begin{theorem}[Affine kernel representations]\label{thm:imp rep}
Consider a  non-empty  system $\mathcal{B} \in \mathcal{A}^q$ and a polynomial matrix $R\in\mathbb{R}^{g\times q}[\xi]$. Then
\begin{equation}\label{eq:affine_kernel_representation} 
\mathcal{B}   = \Ker_c \, R(\sigma)  
\end{equation}
for some $c\in\mathbb{R}^g$  if and only if 
\begin{equation}\label{eq:linear_kernel_representation}
\mathcal{B}_\textup{dif} = \Ker \, R(\sigma). 
\end{equation}
\end{theorem}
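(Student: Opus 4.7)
The plan is to prove both directions of the equivalence by exploiting the offset decomposition of Theorem~\ref{thm:lin+offset}, which guarantees that $\mathcal{B} = \mathcal{B}_\textup{dif} + \{\wo\}$ for any $\wo \in \mathcal{B}$. The key observation throughout is that the polynomial operator $R(\sigma)$ commutes with the shift $\sigma$, so that shift-invariance of $\mathcal{B}$ forces $R(\sigma)\wo$ to be time-independent.

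For the forward direction, suppose $\mathcal{B} = \Ker_c R(\sigma)$ for some $c \in \mathbb{R}^g$. For any $v, w \in \mathcal{B}$, both $R(\sigma)v = c$ and $R(\sigma)w = c$, so $R(\sigma)(v-w) = 0$. This shows $\mathcal{B}_\textup{dif} = \mathcal{B} - \mathcal{B} \subseteq \Ker R(\sigma)$. For the reverse inclusion, pick any $\wo \in \mathcal{B}$ (possible since $\mathcal{B}$ is non-empty) and take any $u \in \Ker R(\sigma)$; then $R(\sigma)(u+\wo) = 0 + c = c$, so $u+\wo \in \mathcal{B}$, hence $u = (u+\wo)-\wo \in \mathcal{B}_\textup{dif}$.

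For the reverse direction, assume $\mathcal{B}_\textup{dif} = \Ker R(\sigma)$ and fix any $\wo \in \mathcal{B}$. The candidate constant is $c := R(\sigma)\wo$, which \emph{a priori} is a sequence in $\mathbb{R}^g$ rather than a single vector. To see that it is in fact constant, I use shift-invariance of $\mathcal{B}$: since $\sigma\wo \in \mathcal{B}$, the difference $\sigma \wo - \wo$ lies in $\mathcal{B}_\textup{dif} = \Ker R(\sigma)$, so $R(\sigma)(\sigma\wo - \wo) = 0$. Because $R(\sigma)$ commutes with $\sigma$, this rearranges to $\sigma(R(\sigma)\wo) = R(\sigma)\wo$, which means that $R(\sigma)\wo$ is a shift-invariant sequence in $\mathbb{R}^g$, and thus equal to a constant $c \in \mathbb{R}^g$. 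With this $c$ in hand, the equality $\mathcal{B} = \Ker_c R(\sigma)$ follows from Theorem~\ref{thm:lin+offset}: if $w \in \mathcal{B}$, then $w = \wo + v$ with $v \in \mathcal{B}_\textup{dif}$, giving $R(\sigma)w = c + 0 = c$; conversely, if $R(\sigma)w = c$, then $R(\sigma)(w-\wo) = 0$, so $w - \wo \in \Ker R(\sigma) = \mathcal{B}_\textup{dif}$, hence $w \in \mathcal{B}_\textup{dif} + \{\wo\} = \mathcal{B}$.

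The one point requiring a moment's care is precisely the well-definedness of $c$ as a vector rather than a sequence; this is where the structural assumption that $\mathcal{B}$ is time-invariant becomes indispensable, and it is the only step that does not carry over mechanically from the classical linear kernel representation theory. Everything else reduces to linear manipulations enabled by the offset decomposition, so no further obstacles are expected.
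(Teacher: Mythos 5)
Your proof is correct and follows essentially the same route as the paper's: both directions rest on the observation that differences of trajectories in $\mathcal{B}$ lie in $\Ker R(\sigma)$, and the constancy of $c := R(\sigma)\wo$ is deduced from shift-invariance of $\mathcal{B}$ exactly as in the paper (the paper uses $w_2 = \sigma^k w_1$ for all $k\in\Z$, you use $k=1$ plus the fact that a shift-invariant sequence is constant, which is the same argument). No gaps.
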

\noindent
Since any  non-empty  system in $\L^q$ admits a kernel representation of the form~\eqref{eq:linear_kernel_representation}, we can conclude that any system in $\A^q$ admits a representation of the form \eqref{eq:affine_kernel_representation}. We will refer to \eqref{eq:affine_kernel_representation} as a \textit{kernel representation} of the behavior $\B$, echoing the established terminology for~\eqref{eq:kernel_representation_linear}. The qualifiers \textit{affine} and \textit{linear} will be used, if necessary, to emphasize the assumptions on the underlying behavior. Theorem~\ref{thm:imp rep} in fact strengthens this observation: \emph{every} kernel representation of the difference behavior $\mathcal{B}_{\mathrm{dif}}$ induces an affine kernel representation of the behavior $\B$. One thing this allows us to do is to define the \textit{lag} of $\B\in\A$, denoted $\ell(\B)$ as the minimal degree of $R(\xi)$ over all affine kernel representations of $\B$.
 
It is important to highlight a key distinction between the linear and affine cases: while every \gls{LTI} kernel representation defines a nonempty behavior, an affine one may not. For \gls{LTI} systems, the correspondence between behaviors and kernels of polynomial matrices is well understood: every $\B \in \L^q$ admits a kernel representation of the form~\eqref{eq:kernel_representation_linear}, and conversely every polynomial matrix $R(\xi)$ defines a linear system $\Ker \, R(\sigma) \in \L^q$. Moreover, two polynomial matrices $R(\xi)$ and $R'(\xi)$ define the same system if they differ by the addition or removal of zero rows and left-multiplication with a unimodular matrix (cf. \cite[Corollary 3.6.3]{willems1997introduction}).
Equivalently, the map  ${R(\xi) \mapsto  \Ker  R(\sigma)}$
defines a one-to-one correspondence between equivalence classes of polynomial matrices
and elements of $\L^q$.  

In contrast to the linear case, the correspondence does not carry over directly for affine systems, as the set \( \Ker_c R(\sigma) \) may be empty.  A pair \( (R(\xi), c) \) is said to be \textit{consistent} if it defines a non-empty affine system. A detailed analysis of this consistency property is deferred to Appendices~\ref{sec:equivalence} and~\ref{sec:consistency}.
In particular, Appendix~\ref{sec:equivalence} extends the classical equivalence classes to the affine case and characterizes when two affine kernel representations define the same system, while Appendix~\ref{sec:consistency} provides testable conditions for consistency.  
In general, the map ${(R(\xi), c) \mapsto \Ker_c R(\sigma)}$ induces a bijection between the set of all nonempty affine systems and the equivalence classes (cf.~Theorem~\ref{thm:equiv}) of \textit{consistent} pairs \( (R(\xi), c) \).

\subsection{Input-output representations}

This section introduces the affine counterpart of the classical \emph{input--output representations} for \gls{LTI} systems~\cite{willems1986timea}. To this end, we build on standard notions from behavioral systems theory, such as input--output partitions, which are sufficiently general to apply to a broad range of model classes, including both \gls{LTI} and \gls{ATI} systems. 
Since these notions may not be familiar to all readers, we provide a brief overview and refer to~\cite{willems1986timea} for a more detailed exposition.

Consider a behavior $\B$ and partition its variables as $w = \left[\begin{smallmatrix} u \\ y\end{smallmatrix}\right]$, with $u(t)\in\mathbb{R}^{m}$ and $ y(t) \in \mathbb{R}^{p}$. Any such partition induces natural projections ${ \pi_u:  w \mapsto u}$ and ${ \pi_y:  w \mapsto y}$. The variable $u$ is called \textit{free} for $\B$ if for every $u$ there exists $y$ such that $\left[\begin{smallmatrix} u \\ y\end{smallmatrix}\right]\in\B$. In other words, $u$ is free if ${\pi_u\B = (\mathbb{R}^{m})^\Z}$; it is said to be \textit{maximally free} if none of the components of $y$ can be chosen freely. If $u$ is maximally free, we call $u$ an \textit{input} and the partition an \textit{input-output partition}. Note that the dimension of all choices of input, the \textit{input cardinality} is the same, and we denote it by $\mathbf{m}(\B)$. Similarly, the \textit{output cardinality} is denoted $\mathbf{p}(\B)= q- \mathbf{m}(\B)$. 

Without loss of generality, the variables of any complete \gls{LTI} system can be \textit{permuted} to an input-output partition.  Given ${\B \in \L^q}$ with input-output partition 
$w = \left[\begin{smallmatrix} u \\ y\end{smallmatrix}\right]$,
with ${u \in \R^m}$ and ${y \in \R^p}$, an input-output representation of $\mathcal{B}$ is defined as
\[
\mathcal{B} := \{\, w \mid P(\sigma) y = Q(\sigma) u \,\},
\]
with $P(\xi) \in \R^{g \times p}$ and $Q(\xi) \in \R^{g \times m}$ polynomial matrices.  Moreover, $P$ and $Q$ can be chosen to be \emph{left–coprime}, in which case the representation is unique up to left-multiplication by a unimodular matrix~\cite{willems1986timea}.  

Our  next   result shows that input-output partitions for \gls{ATI} systems correspond precisely to those of their difference behaviors, and  \textit{vice versa}.

\begin{theorem}[Input-output partitions and difference systems] \label{thm:io} 
Consider a  non-empty  system $\mathcal{B} \in \mathcal{A}$.
Then $w = \left[\begin{smallmatrix} u \\ y\end{smallmatrix}\right]$ is an input-output partition of $\B$ if and only if it is an input-output partition for $\B_\textup{dif}$. \end{theorem}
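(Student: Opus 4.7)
The plan is to leverage the offset decomposition from Theorem~\ref{thm:lin+offset} and observe that freeness of any sub-variable is invariant under translation by a fixed trajectory. Since the notion of an input--output partition is entirely built out of freeness properties (freeness of $u$, together with the non-freeness of each component of $y$ together with $u$), the statement then follows for both directions at once.

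First, I would fix any $\wo \in \mathcal{B}$ and apply Theorem~\ref{thm:lin+offset} to write $\mathcal{B} = \mathcal{B}_\textup{dif} + \{\wo\}$. Partitioning $\wo$ compatibly with $w$ as $\wo = \bsm u_o \\ y_o \esm$, the projections onto the $u$-coordinates give
\[
\pi_u \mathcal{B} \;=\; \pi_u \mathcal{B}_\textup{dif} \,+\, \{u_o\},
\]
and similarly for any other coordinate partition of $w$.

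Next, I would establish the following translation-invariance lemma: for any partition $w = \bsm v \\ z \esm$ with $v(t) \in \R^k$, the variable $v$ is free for $\mathcal{B}$ if and only if it is free for $\mathcal{B}_\textup{dif}$. The argument is immediate: $\pi_v \mathcal{B} = \pi_v \mathcal{B}_\textup{dif} + \{v_o\}$, and since $(\R^k)^{\Z}$ is closed under translation by any fixed element,
\[
\pi_v \mathcal{B} = (\R^k)^{\Z} \;\Longleftrightarrow\; \pi_v \mathcal{B}_\textup{dif} = (\R^k)^{\Z} - \{v_o\} = (\R^k)^{\Z}.
\]

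Finally, applying this lemma to $u$ itself shows that $u$ is free for $\mathcal{B}$ iff it is free for $\mathcal{B}_\textup{dif}$. Applying it instead to every enlarged sub-variable obtained by appending a single component of $y$ to $u$ shows that no component of $y$ can be chosen freely for $\mathcal{B}$ iff the same holds for $\mathcal{B}_\textup{dif}$. Combining these two statements yields the claimed equivalence for input--output partitions.

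The proof is essentially bookkeeping once the offset decomposition is in place; the only subtle point is making the definition of \emph{maximally free} explicit in terms of freeness of enlarged sub-variables, so that the translation-invariance lemma can be applied componentwise. I do not expect a substantial obstacle beyond this.
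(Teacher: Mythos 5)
Your proposal is correct and follows essentially the same route as the paper: both fix an offset trajectory $\wo\in\mathcal{B}$, use Theorem~\ref{thm:lin+offset} to write $\pi_v\mathcal{B}=\pi_v\mathcal{B}_\textup{dif}+\{\pi_v\wo\}$, and conclude that freeness of any sub-variable is preserved under the translation. Your treatment of maximal freeness via enlarged sub-variables is in fact slightly more explicit than the paper's, which handles only the freeness of $u$ in detail and leaves the maximality bookkeeping implicit.
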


\noindent
Theorem~\ref{thm:io} allows us to prove a result analogous to Theorem~\ref{thm:imp rep} for input–output representations; the proof is similar and, hence, omitted for brevity.    

\begin{corollary}[Affine input-output representations] \label{thm:affine_IO_representations}
Consider a  non-empty  system ${\mathcal{B} \in \mathcal{A}^q}$ and a polynomial matrix ${R\in\mathbb{R}^{g\times q}[\xi]}$.  Let $w = \left[\begin{smallmatrix} u \\ y\end{smallmatrix}\right]$ be an input-output partition, where ${u\in\mathbb{R}^m}$ is an input and ${y\in\mathbb{R}^p}$ is an output, with ${p=q-m}$. Let $P\in\mathbb{R}^{p\times p}[\xi]$ and $Q\in\mathbb{R}^{p\times m}[\xi]$. Then  
        \begin{equation}\label{eq:affine_io_representation} \mathcal{B} = \{ w \mid P(\sigma) y = Q(\sigma)u + c\} \end{equation}	
 	for some $c\in\mathbb{R}^p$ if and only if 
        \begin{equation}\label{eq:linear_io_representation} \mathcal{B}_\textup{dif} = \{ w \mid P(\sigma) y = Q(\sigma)u \} .\end{equation}
\end{corollary}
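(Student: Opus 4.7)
The plan is to recognise the affine input--output form as a special case of the affine kernel representation of Theorem~\ref{thm:imp rep} and then invoke that theorem directly. Writing $w = \col(u,y)$ with $u\in\R^m$ and $y\in\R^p$, I would define the polynomial matrix
\[
R(\xi) := \bma -Q(\xi) & P(\xi) \ema \in \R^{p\times q}[\xi],
\]
so that $P(\sigma)y = Q(\sigma)u + c$ becomes $R(\sigma)w = c$. With this identification, \eqref{eq:affine_io_representation} reads $\mathcal{B} = \Ker_c R(\sigma)$, and \eqref{eq:linear_io_representation} reads $\mathcal{B}_\textup{dif} = \Ker R(\sigma)$.

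Since $\mathcal{B}\in\mathcal{A}^q$ is assumed non-empty, Theorem~\ref{thm:lin+offset} guarantees that $\mathcal{B}_\textup{dif}\in\mathcal{L}^q$, so the hypotheses of Theorem~\ref{thm:imp rep} are met. Applying it with the polynomial matrix $R(\xi)$ above yields precisely the stated equivalence: $\mathcal{B}=\Ker_c R(\sigma)$ for some $c\in\R^p$ if and only if $\mathcal{B}_\textup{dif}=\Ker R(\sigma)$. The forward direction recovers \eqref{eq:linear_io_representation} by subtracting any two trajectories of $\mathcal{B}$ (the constant $c$ cancels), while the reverse direction constructs $c := R(\sigma)\wo$ for any offset trajectory $\wo\in\mathcal{B}$ furnished by Theorem~\ref{thm:lin+offset}, an observation that is already baked into Theorem~\ref{thm:imp rep}.

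There is really no obstacle beyond this bookkeeping: the only substantive content is already contained in Theorem~\ref{thm:imp rep}, and Theorem~\ref{thm:io} serves only to justify \emph{a priori} that the partition $w=\col(u,y)$ is a legitimate input--output partition for both $\mathcal{B}$ and $\mathcal{B}_\textup{dif}$, so that speaking of $P(\sigma)y=Q(\sigma)u$ on either behavior is meaningful. Once the reduction $R(\xi) = \bma -Q(\xi) & P(\xi)\ema$ is in place, the corollary is a one-line consequence of Theorem~\ref{thm:imp rep}, which is precisely why the authors omit the details.
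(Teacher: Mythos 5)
Your proposal is correct. The paper itself omits the proof, remarking only that it is ``similar'' to that of Theorem~\ref{thm:imp rep}, and your reduction --- setting $R(\xi) = \bma -Q(\xi) \;\; P(\xi) \ema$ so that \eqref{eq:affine_io_representation} becomes $\mathcal{B} = \Ker_c R(\sigma)$ and \eqref{eq:linear_io_representation} becomes $\mathcal{B}_\textup{dif} = \Ker R(\sigma)$ --- turns the corollary into a literal instance of Theorem~\ref{thm:imp rep} rather than a re-run of its argument. This is arguably the cleanest route: the equivalence itself needs nothing beyond Theorem~\ref{thm:imp rep}, and you are right that the input--output partition hypothesis (via Theorem~\ref{thm:io}) only certifies that calling \eqref{eq:affine_io_representation} an \emph{input--output} representation is legitimate for both $\mathcal{B}$ and $\mathcal{B}_\textup{dif}$; it plays no role in the set equality. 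The only bookkeeping to watch is the sign/ordering convention $w = \col(u,y)$, which you handle consistently with the paper.
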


\subsection{State-space representations}

The key feature of a state variable is its \emph{Markovian} property: the current state summarizes all relevant past information so that future trajectories depend only on the present state and the current and future inputs. 
For \gls{LTI} systems, this property yields the familiar state–space representation  
\beq\label{eq:state-space-linear} 
 \sigma x = Ax + Bu, \quad y = Cx + Du.
\eeq
 For \gls{ATI} systems, however, the appropriate form of a state–space representation is less immediate.   One may conjecture that constant offsets in the state–update and output equations should be sufficient, leading to affine state–space equations of the form  
\begin{equation} \label{eq:state-space-affine} 
\sigma x = Ax + Bu + E, \quad    y   = Cx + Du + F,
\end{equation}
with $x(t) \in \R^n$, $u(t) \in \R^m$, and $y(t) \in \R^p$.  
A key question is whether \emph{every} system $\B \in \A$ can indeed be represented in this way.  
Our next result shows that the answer is affirmative. Furthermore, \emph{any} realization of the corresponding difference behavior $\B_{\mathrm{dif}}$ gives rise to an affine state–space representation of the associated affine system of the form~\eqref{eq:state-space-affine}. 

\begin{theorem}[Affine state-space representations]\label{thm:affine states vs linear states}
Consider a  non-empty  system $\mathcal{B} \in \mathcal{A}^q$. Let $w = \left[\begin{smallmatrix} u \\ y\end{smallmatrix}\right]$ be an input-output partition, where ${u\in\mathbb{R}^m}$ is an input and ${y\in\mathbb{R}^p}$ is an output, with ${p=q-m}$. Let ${n\in\mathbb{N}}$ and let $A\in\mathbb{R}^{n\times n}$, $B\in\mathbb{R}^{n\times m}$, $C\in\mathbb{R}^{p\times n}$, and $D\in\mathbb{R}^{p\times m}$. Then, 
\begin{equation}\label{eq:affine_state_space_representation}   
\mathcal{B} =  
\big\{\, w = \bsm u \\ y \esm \ | \ \text{there is $x$, such that \eqref{eq:state-space-affine}} \,\big\}, 
\end{equation}
for some $E\in\mathbb{R}^n$ and $F\in\mathbb{R}^p$ if and only if 
\begin{equation}\label{eq:linear state}  
\B_\textup{dif} =  
\big\{\, w = \bsm u \\ y \esm \ | \ \text{there is $x$, such that  \eqref{eq:state-space-linear}} \,\big\}. 
\end{equation}
\end{theorem}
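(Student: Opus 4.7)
The plan is to prove the two implications separately: necessity is essentially algebraic cancellation, whereas sufficiency is the substantive part. My overall strategy for sufficiency will be to reduce the problem to Theorem~\ref{thm:lin+offset} by exhibiting a single affine state sequence for one fixed trajectory $w_o \in \mathcal{B}$, thereby tying the affine representation to the linear one.

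For \emph{necessity}, I assume $\mathcal{B}$ admits \eqref{eq:affine_state_space_representation} with some $E,F$. Given $w_1, w_2 \in \mathcal{B}$ with affine state sequences $x_1, x_2$, the constants $E$ and $F$ cancel in the pointwise differences, so $x_1 - x_2$ is a linear state for $w_1 - w_2$, proving one inclusion. Conversely, taking any $w$ in the right-hand side of \eqref{eq:linear state} with state $x$, together with any $w_o \in \mathcal{B}$ with affine state $x_o$, one checks that $x + x_o$ is an affine state for $w + w_o$; thus $w + w_o \in \mathcal{B}$ and $w = (w + w_o) - w_o \in \mathcal{B}_\textup{dif}$.

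For \emph{sufficiency}, assume \eqref{eq:linear state} and fix any $w_o = \col(u_o, y_o) \in \mathcal{B}$, which exists by non-emptiness. Since $\mathcal{B}$ is shift-invariant, $\sigma w_o - w_o \in \mathcal{B}_\textup{dif}$, so \eqref{eq:linear state} supplies a state sequence $\xi$ satisfying $\sigma \xi = A\xi + B(\sigma u_o - u_o)$ and $\sigma y_o - y_o = C\xi + D(\sigma u_o - u_o)$. I then construct $x_o$ as a primitive of $\xi$, namely any sequence on $\Z$ with $\sigma x_o - x_o = \xi$, obtained by fixing $x_o(0)$ arbitrarily and telescoping $\xi$ in both time directions. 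Setting $E(t) := \sigma x_o(t) - Ax_o(t) - Bu_o(t)$ and $F(t) := y_o(t) - Cx_o(t) - Du_o(t)$, a direct calculation gives $E(t+1) - E(t) = \sigma\xi(t) - A\xi(t) - B(\sigma u_o - u_o)(t) = 0$ and analogously $F(t+1) - F(t) = 0$, so both sequences are constants $E \in \R^n$ and $F \in \R^p$. Thus $(x_o, u_o, y_o)$ satisfies \eqref{eq:state-space-affine} with these offsets. Denoting by $\mathcal{B}^\star$ the behavior \eqref{eq:affine_state_space_representation} associated with these matrices, the necessity direction applied to $\mathcal{B}^\star$ yields $(\mathcal{B}^\star)_\textup{dif} = \mathcal{B}_\textup{dif}$, and since $w_o \in \mathcal{B}^\star \cap \mathcal{B}$, Theorem~\ref{thm:lin+offset} gives $\mathcal{B}^\star = \mathcal{B}_\textup{dif} + \{w_o\} = \mathcal{B}$.

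The main obstacle is sufficiency: the hypothesis \eqref{eq:linear state} provides state sequences only for trajectories in $\mathcal{B}_\textup{dif}$, so one cannot directly extract a state for $w_o$. The key trick is that the finite difference $\sigma w_o - w_o$ \emph{does} lie in $\mathcal{B}_\textup{dif}$ and hence admits a state $\xi$; telescoping $\xi$ produces an affine state $x_o$ for $w_o$ whose defects in the two state-space equations are first-difference-free by construction, and therefore automatically constant, yielding the required offsets $E$ and $F$.
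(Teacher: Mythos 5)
Your proof is correct and follows essentially the same route as the paper's: both directions hinge on differencing a fixed trajectory of $\mathcal{B}$ to land in $\mathcal{B}_\textup{dif}$, anti-differencing the resulting linear state to obtain an affine state whose defect in the state and output equations is first-difference-free (hence constant, yielding $E$ and $F$), and then concluding by comparing difference behaviors via Theorem~\ref{thm:lin+offset}. The only cosmetic difference is that the paper finishes the sufficiency direction by explicit mutual inclusion rather than by applying the already-proved necessity direction to the constructed behavior $\mathcal{B}^\star$.
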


Theorem~\ref{thm:affine states vs linear states} has two main implications. First, it indirectly defines a well-defined notion of \emph{order} for any \gls{ATI} system, showing that any $\B\in\A^q$ admits a \textit{minimal} affine state-space representation~\eqref{eq:state-space-affine}, where ${n \in \N}$ is as small as possible. The smallest such $n$ is called the \textit{order} or \textit{state cardinality}, and denoted $\mathbf{n}(\B)$. Second, it relates the \emph{internal} properties of an \gls{ATI} system, such as controllability, observability, and related notions, directly to those of its associated difference system. In Section~\ref{sec:props}, we make use of both implications to characterize the controllability of \gls{ATI} systems and associated invariants. Finally, note that one cannot, in general, restrict to the special cases $E=0$ or $F=0$: while both yield affine systems, they do not cover the full model class $\A$.

\section{System properties} \label{sec:props}

We now turn to two fundamental system-theoretic aspects — controllability and integer invariants — that play a central role in developing a fundamental lemma for affine systems.

\subsection{Controllability} \label{sssec:controllability}

Controllability is a key concept in systems and control, capturing the ability to steer a system between trajectories over finite time. For our purposes, it is essential to establish a basic result for \gls{ATI} systems. Given the structural link between an \gls{ATI} system $\mathcal{B}$ and its associated difference system $\mathcal{B}_\mathrm{dif}$, it is natural to ask: can the controllability of one be inferred from the other? We show that this is indeed the case.

In behavioral system theory, a system $\mathcal{B}$ is said to be \emph{controllable} if, for any trajectories $w_1, w_2 \in \mathcal{B}$ and any  $t_1 \in \Z$,   there exists $t_2 \ge t_1$ and a trajectory ${w} \in \mathcal{B}$ such that
\begin{equation} \label{eq:cont patch}
{w}(t) = 
\begin{cases}
w_1(t), & t < t_1, \\
w_2(t), & t \ge t_2.
\end{cases}
\end{equation}
In other words, the system is controllable if any two trajectories in $\mathcal{B}$ can be patched in finite time. We now make an elementary, but useful observation.

\begin{lemma}[Controllability is translation-invariant] \label{lemma:controllability_is_translation-invariant}
Consider a  non-empty  system $\mathcal{B} \subseteq (\mathbb{R}^q)^{\mathbb{Z}}$ and let $\wo \in (\mathbb{R}^q)^{\mathbb{Z}}$. Then $\mathcal{B}$ is controllable if and only if $\mathcal{B} + \{ \wo \}$ is controllable.
\end{lemma}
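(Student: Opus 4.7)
The plan is to exploit the bijection between $\mathcal{B}$ and $\mathcal{B} + \{\wo\}$ induced by translation by $\wo$, and observe that the patching condition~\eqref{eq:cont patch} is preserved pointwise under this bijection. Since the definition of controllability only involves pointwise equalities between sequences, the translation trick transfers patches from one behavior to the other with no loss.

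First I would establish the forward direction. Assume $\mathcal{B}$ is controllable, and pick any $\tilde w_1, \tilde w_2 \in \mathcal{B} + \{\wo\}$ and any $t_1 \in \Z$. By definition of the elementwise sum, there exist $w_1, w_2 \in \mathcal{B}$ with $\tilde w_i = w_i + \wo$ for $i=1,2$. Controllability of $\mathcal{B}$ yields some $t_2 \ge t_1$ and a trajectory $w \in \mathcal{B}$ such that $w(t) = w_1(t)$ for $t < t_1$ and $w(t) = w_2(t)$ for $t \ge t_2$. Setting $\tilde w := w + \wo$, we have $\tilde w \in \mathcal{B} + \{\wo\}$, and a direct pointwise verification gives $\tilde w(t) = w_i(t) + \wo(t) = \tilde w_i(t)$ on the appropriate intervals, exhibiting the required patch.

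For the reverse direction I would simply note that $\bigl(\mathcal{B} + \{\wo\}\bigr) + \{-\wo\} = \mathcal{B}$, so applying the forward direction to the system $\mathcal{B} + \{\wo\}$ and the sequence $-\wo$ immediately yields that controllability of $\mathcal{B} + \{\wo\}$ implies controllability of $\mathcal{B}$.

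I do not anticipate a real obstacle: the statement is essentially a bookkeeping observation about how a pointwise-defined property behaves under a fixed translation. The only subtlety worth flagging is that $\wo$ is an arbitrary sequence in $(\mathbb{R}^q)^{\Z}$, not assumed to lie in $\mathcal{B}$ nor to be shift-invariant or even a trajectory of any linear system; since~\eqref{eq:cont patch} is stated pointwise, this causes no difficulty, and the same argument would work for any bijection of $(\mathbb{R}^q)^{\Z}$ that commutes with evaluation at each time instant.
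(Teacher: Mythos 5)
Your proposal is correct and follows essentially the same argument as the paper: translate the two endpoints back into $\mathcal{B}$, patch there using controllability, and translate the patching trajectory by $\wo$, with the converse obtained by translating with $-\wo$. No gaps.
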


\noindent
Lemma~\ref{lemma:controllability_is_translation-invariant} holds without further assumptions. As a consequence, we can use Theorem~\ref{thm:lin+offset} to conclude that any \gls{ATI} system $\mathcal{B}$ is controllable if and only if its difference system $\mathcal{B}_\textup{dif}$ is controllable.  As a consequence, standard controllability tests can be applied directly to the difference system $\mathcal{B}_\textup{dif}$. We now formalize this observation in the following statement.

\begin{corollary}[Controllability of affine systems] \label{cor:controllability-of-affine-systems}
Consider a  non-empty  affine system $\mathcal{B}$ with associated difference system $\mathcal{B}_\mathrm{dif}$. 
Then $\mathcal{B}$ is controllable if and only if $\mathcal{B}_\mathrm{dif}$ is controllable. 
If, in addition, $\mathcal{B}$ is time-invariant, then the following hold:
\begin{itemize}
    \item[1.] If $\mathcal{B}$ admits the kernel representation~\eqref{eq:affine_kernel_representation}, then $\mathcal{B}$ is controllable if and only if 
    \[
        \rank \, R(\lambda) \;\; \text{is constant for all } \lambda \in \mathbb{C} .
    \]
    \item[2.] If $\mathcal{B}$ admits the minimal state space representation~\eqref{eq:affine_state_space_representation}, then $\mathcal{B}$ is controllable if and only if 
    \[
        \rank \begin{bmatrix} B & AB & \cdots & A^{n-1}B \end{bmatrix} = n .
    \]
\end{itemize}
\end{corollary}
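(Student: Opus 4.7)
The strategy is to reduce all three claims to their well-known LTI counterparts, using Theorem~\ref{thm:lin+offset} and Lemma~\ref{lemma:controllability_is_translation-invariant} as the bridge between the affine system $\mathcal{B}$ and its difference system $\mathcal{B}_\mathrm{dif}$.

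For the first claim (equivalence of controllability of $\mathcal{B}$ and $\mathcal{B}_\mathrm{dif}$), I would pick any $\wo \in \mathcal{B}$ and apply Theorem~\ref{thm:lin+offset} to write $\mathcal{B} = \mathcal{B}_\mathrm{dif} + \{\wo\}$. Since Lemma~\ref{lemma:controllability_is_translation-invariant} asserts that controllability is invariant under translation by a single sequence, the equivalence follows directly. Note that this step makes no use of time-invariance, so it applies to any non-empty affine system.

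For item~1, Theorem~\ref{thm:imp rep} identifies the affine kernel representation $\mathcal{B} = \Ker_c R(\sigma)$ with the LTI kernel representation $\mathcal{B}_\mathrm{dif} = \Ker R(\sigma)$. Combined with the previous step, $\mathcal{B}$ is controllable if and only if $\Ker R(\sigma) \in \mathcal{L}$ is controllable, and the classical characterization of behavioral controllability for LTI systems in kernel form (see, e.g.,~\cite{willems1997introduction}) immediately yields the constant-rank condition on $R(\lambda)$. For item~2, Theorem~\ref{thm:affine states vs linear states} ensures that the affine tuple $(A, B, C, D, E, F)$ realizes $\mathcal{B}$ if and only if the linear tuple $(A, B, C, D)$ realizes $\mathcal{B}_\mathrm{dif}$. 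Since the state dimension $n$ is shared, minimality of the affine representation (i.e., $n = \mathbf{n}(\mathcal{B})$) coincides with minimality of the linear one for $\mathcal{B}_\mathrm{dif}$. The claim then reduces to the Kalman rank criterion applied to $(A, B)$ via the first step.

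The main obstacle is essentially non-existent: the corollary is a tidy reassembly of previously established machinery. The only point requiring care is confirming that the notion of minimality used in the corollary aligns with minimality of the induced linear realization of $\mathcal{B}_\mathrm{dif}$, but this is immediate from the constructive proof of Theorem~\ref{thm:affine states vs linear states}, as the same state dimension $n$ governs both representations. Consequently, the full corollary follows without any new computation beyond invoking the classical LTI controllability criteria.
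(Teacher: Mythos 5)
Your proposal is correct and follows essentially the same route as the paper, which presents the corollary as an immediate consequence of Lemma~\ref{lemma:controllability_is_translation-invariant} together with Theorem~\ref{thm:lin+offset} for the first claim, and then transfers the kernel and state-space representations to $\mathcal{B}_\mathrm{dif}$ via Theorems~\ref{thm:imp rep} and~\ref{thm:affine states vs linear states} so that the classical LTI controllability tests apply. Your added remark on the alignment of minimality between the affine realization and the induced linear realization of $\mathcal{B}_\mathrm{dif}$ is a worthwhile point of care that the paper leaves implicit.
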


\subsection{Integer invariants}\label{sssec:integer_invariants}
A fundamental insight of behavioral system theory is that every \gls{LTI} system ${\B \in \L^q}$ is characterized by a collection of \emph{integer invariants}~\cite{willems1986timea}. These invariants are intrinsic: they reflect the structure of the behavior itself and do not depend on the choice of representation. For any affine system $\B\in\A^q$, we have  indirectly  shown that the \emph{variable cardinality} $\mathbf{q}(\B)$, the \emph{input cardinality} $\mathbf{m}(\B)$, the \emph{output cardinality} $\mathbf{p}(\B)$, the \emph{state cardinality} $\mathbf{n}(\B)$, and the  \emph{lag} $\boldsymbol{\ell}(\B)$ are also  well-defined,  as the integer invariants coincide with those of the corresponding difference system $\B_{\textup{dif}}$. 
However, the definitions of input cardinality, state cardinality, and lag are purely existential and therefore do not immediately provide a constructive way to determine them. To bridge this gap, we derive the following result, which generalizes the linear case.

\begin{lemma}[Integer invariants] \label{lemma:integer-invariants}
Consider a non-empty system $\B \in \A^q$.  
Define the sequences
\[
\begin{alignedat}{2}
d_t &:= \dim \B|_{[1,t]}, &\quad &t \in \N,\\
\rho_t &:= d_t - d_{t-1}, &\quad &d_0 := 0,\\
\gamma_t &:= \rho_{t-1} - \rho_t, &\quad &\rho_0 := q,
\end{alignedat}
\]
where $\dim$ denotes the dimension of an affine set.  
Then
\[
\begin{aligned}
\mathbf{m}(\B) &= \lim_{t \to \infty} \rho_t,\\[2pt]
\mathbf{n}(\B) &= \sum_{t=1}^{\infty} t\,\gamma_t,\\[2pt]
\boldsymbol{\ell}(\B) &= \min\{\, t \in \N \mid \rho_t = \mathbf{m}(\B)\,\}.
\end{aligned}
\]
\end{lemma}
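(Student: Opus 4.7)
The plan is to reduce everything to the classical linear case by invoking Theorem~\ref{thm:lin+offset}, and then to transport the invariants back to the affine system through the machinery already developed in Theorems~\ref{thm:imp rep}, \ref{thm:io} and~\ref{thm:affine states vs linear states}.

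First, I would show that the sequences $d_t,\rho_t,\gamma_t$ attached to $\B$ coincide with the analogous sequences attached to the difference system $\B_\textup{dif}$. By Theorem~\ref{thm:lin+offset}, for any offset trajectory $\wo\in\B$ we have $\B=\B_\textup{dif}+\{\wo\}$, and restriction commutes with translation, so
\[
\B|_{[1,t]} \;=\; \B_\textup{dif}|_{[1,t]} + \{\wo|_{[1,t]}\}.
\]
The right-hand side is a translate of a linear subspace, whose affine dimension equals the linear dimension of $\B_\textup{dif}|_{[1,t]}$. Hence $d_t(\B)=d_t(\B_\textup{dif})$ for all $t\in\N$, and consequently $\rho_t(\B)=\rho_t(\B_\textup{dif})$ and $\gamma_t(\B)=\gamma_t(\B_\textup{dif})$ for all $t$. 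The boundary conventions $d_0:=0$ and $\rho_0:=q$ are identical in both settings, so no adjustment is needed.

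Second, I would argue that the integer invariants themselves are preserved, that is, $\mathbf{m}(\B)=\mathbf{m}(\B_\textup{dif})$, $\mathbf{n}(\B)=\mathbf{n}(\B_\textup{dif})$, and $\boldsymbol{\ell}(\B)=\boldsymbol{\ell}(\B_\textup{dif})$. This is exactly what was stated in the paragraph preceding the lemma: Theorem~\ref{thm:io} identifies the input--output partitions of $\B$ and $\B_\textup{dif}$ (giving $\mathbf{m}$ and $\mathbf{p}$), Theorem~\ref{thm:affine states vs linear states} shows that minimal affine state-space representations of $\B$ correspond bijectively to minimal linear state-space representations of $\B_\textup{dif}$ (giving $\mathbf{n}$), and Theorem~\ref{thm:imp rep} does the same for affine kernel representations (giving $\boldsymbol{\ell}$ via minimal degree of $R(\xi)$).

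Third, I would apply the classical characterization of integer invariants for \gls{LTI} systems~\cite{willems1986timea} to $\B_\textup{dif}\in\L^q$, which yields
\[
\mathbf{m}(\B_\textup{dif})=\lim_{t\to\infty}\rho_t,\qquad
\mathbf{n}(\B_\textup{dif})=\sum_{t=1}^{\infty}t\,\gamma_t,\qquad
\boldsymbol{\ell}(\B_\textup{dif})=\min\{t\in\N\mid \rho_t=\mathbf{m}(\B_\textup{dif})\}.
\]
Combining the equalities from the first and second steps delivers the claimed expressions for $\B$.

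The main obstacle is the first step: one has to check carefully that the affine dimension of a translate of a subspace equals the dimension of the subspace, and that the edge-case conventions ($d_0:=0$ and $\rho_0:=q$) are compatible in the affine setting (in particular, that $d_1\le q$ with equality iff $u$ is free of dimension $q$, and that $\rho_t$ is non-increasing, so the limit defining $\mathbf{m}(\B)$ is attained). Everything else is a bookkeeping reduction to the classical linear statement.
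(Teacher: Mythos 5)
Your proposal is correct and follows essentially the same route as the paper's proof: establish that the integer invariants of $\B$ and $\B_\textup{dif}$ coincide via Theorems~\ref{thm:io}, \ref{thm:affine states vs linear states} and~\ref{thm:imp rep}, note that $\dim \B|_{[1,t]} = \dim \B_{\textup{dif}}|_{[1,t]}$, and then invoke the classical characterization for $\B_\textup{dif}\in\L^q$ from~\cite[Theorem 6]{willems1986timea}. Your version merely spells out the translation argument for the dimension equality, which the paper states without elaboration.
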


\section{Fundamental lemma for affine systems}  \label{sec:fundamental_lemma_ATI}

Input design concerns selecting inputs that sufficiently excite system dynamics to enable identifiability and effective control. For \gls{LTI} systems, this idea is formalized by the \emph{fundamental lemma}~\cite{willems2005note,vanWaarde2020willems}, which asserts that all trajectories of a controllable \gls{LTI} system can be constructed from a single sufficiently exciting trajectory.  The condition that guarantees this property is the classical persistence of excitation~\eqref{eq:persistence_of_excitation_linear}, which has been instrumental in shaping the theory of input design~\cite{ljung1999system}.  Extensions to affine systems have typically relied on the same condition, see, e.g.,~\cite{martinelli2022data,berberich2022linear}.

However, we argue that this classical condition is overly conservative in the affine setting. The structure of affine systems calls for a dedicated notion of persistence of excitation, one that better reflects the geometry of the model class. In what follows, we introduce a new persistence of excitation condition tailored to \gls{ATI} systems and show that it enables a generalization of the fundamental lemma~\cite{vanWaarde2020willems} beyond the \gls{LTI} case. This perspective emphasizes that the central issue is not merely verifying a generalized excitation condition, but rather understanding how input design guarantees the constructive power of the fundamental lemma.  
As a result, we obtain provably lower data requirements compared to the state of the art (cf.,~\cite{martinelli2022data,berberich2022linear}), highlighting the importance of adapting excitation requirements to the model class under study.
 
A sequence $u \in (\mathbb{R}^m)^{\mathbf{T}}$ is said to be \emph{persistently exciting of order $L\in\mathbf{T}$ for the model class $\A$} if 
\beq 
\rank \,
	\bma 
		\begin{array}{c}  
		H_L(u) \\ 
		\mathbb{1}^{\transpose} 
		\end{array} 
	\ema  
=  mL+1.\label{eq:persistence_of_excitation_affine}
\eeq
To distinguish this notion from the classical persistence of excitation condition~\eqref{eq:persistence_of_excitation_linear}, a sequence $u \in (\mathbb{R}^m)^{\mathbf{T}}$ is said to be  \emph{persistently exciting of order $L\in\mathbf{T}$ for the model class $\mathcal{L}$} if the rank condition~\eqref{eq:persistence_of_excitation_linear} holds.

\begin{remark}[Geometry of persistence of excitation]
From a geometric perspective, the classical and affine notions of persistence of excitation differ in how finite windows of the input sequence are represented. For linear systems, the persistence of excitation condition~\eqref{eq:persistence_of_excitation_linear} for requires that these windows can be expressed as \emph{linear} combinations of the columns of~$H_L(u)$.  
For affine systems, the persistence of excitation condition~\eqref{eq:persistence_of_excitation_affine} augments~$H_L(u)$ with a row of ones, enforcing that the windows can instead be written as \emph{affine} combinations of its columns.  
The additional dimension in the rank condition (arising from the row of ones) is a reflection of the extra degree of freedom that separates affine from linear independence.  
We return to this point in Section~\ref{sec:discussion}.
\end{remark}

With a suitable notion of persistence of excitation in place, we are now ready to establish  a fundamental lemma for the model class of \gls{ATI} systems.   We begin by presenting a behavioral formulation akin to \cite[Theorem~1]{willems2005note}.

\begin{theorem}[Fundamental lemma for \gls{ATI} systems] \label{thm:fundamental_lemma_ATI_systems}
Consider a controllable  non-empty system  $\mathcal{B}\in\mathcal{A}^q$. Let $w = \left[\begin{smallmatrix} u \\ y\end{smallmatrix}\right]$ be an input-output partition of $\B$ and let $w_d = \left[\begin{smallmatrix} u_d \\ y_d\end{smallmatrix}\right]\in(\mathbb{R}^{q})^\mathbf{T}$ be a trajectory of $\B$. Assume that $u_d$ is persistently exciting of order $\mathbf{n}(\B)+L$ for the model class $\A$. Then every trajectory $w$ of length \(L\) of the system $\B$ can be expressed as
    \[
    w = {H}_L(w_{d}) g,
    \]
    for some \(g \in \R^{T-L+1}\) such that $\mathbb{1}^{\transpose} g = 1$. 
\end{theorem}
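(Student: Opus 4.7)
My plan is to reduce the theorem to the classical fundamental lemma of~\cite[Theorem~2]{vanWaarde2020willems} by passing through the difference system $\B_\textup{dif}$ and then converting a linear combination into an affine one. The easy forward inclusion is immediate: each column of $H_L(w_d)$ is a length-$L$ window of $w_d$ and, by shift-invariance of $\B$, lies in $\B|_{[1,L]}$; since $\B|_{[1,L]}$ is affine, every combination $H_L(w_d) g$ with $\mathbb{1}^\top g = 1$ belongs to $\B|_{[1,L]}$.

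For the converse, I would introduce the auxiliary length-$(T-1)$ sequence $\tilde w_d := \sigma w_d - w_d$, which by Theorem~\ref{thm:lin+offset} is a trajectory of $\B_\textup{dif}$, and whose input component $\tilde u_d := \sigma u_d - u_d$ is an input of $\B_\textup{dif}$ by Theorem~\ref{thm:io}. The crucial step is the rank identity
\[
H_{n+L}(\tilde u_d) = H_{n+L}(u_d)\, D,
\]
where $D \in \R^{(T-n-L+1)\times(T-n-L)}$ is the column-differencing matrix with $k$-th column $e_{k+1}-e_k$, satisfying $\mathbb{1}^\top D = 0$ and $\Image D = \{v \mid \mathbb{1}^\top v = 0\}$. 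A short rank calculation—leveraging the fact that the affine PE condition~\eqref{eq:persistence_of_excitation_affine} forces $\mathbb{1}^\top$ to lie outside the row space of $H_{n+L}(u_d)$—would then yield $\rank\, H_{n+L}(\tilde u_d) = m(n+L)$, so that $\tilde u_d$ is persistently exciting of order $n+L$ for the class $\L$.

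Since $\B_\textup{dif}$ is a controllable \gls{LTI} system of order $n = \mathbf{n}(\B)$ and input cardinality $m = \mathbf{m}(\B)$ (by Corollary~\ref{cor:controllability-of-affine-systems} and Lemma~\ref{lemma:integer-invariants}), I would then invoke the classical fundamental lemma to conclude that $\B_\textup{dif}|_{[1,L]} = \Image\, H_L(\tilde w_d)$. For any $w \in \B|_{[1,L]}$, setting $w_d^{(1)} := w_d|_{[1,L]} = H_L(w_d) e_1 \in \B|_{[1,L]}$, the difference $w - w_d^{(1)}$ lies in $\B_\textup{dif}|_{[1,L]}$, so there exists $g' \in \R^{T-L}$ with $w - w_d^{(1)} = H_L(\tilde w_d)\,g' = H_L(w_d)\,\tilde D\,g'$, where $\tilde D \in \R^{(T-L+1)\times(T-L)}$ is the analogous column-differencing matrix with $\mathbb{1}^\top \tilde D = 0$. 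Setting $g := e_1 + \tilde D g' \in \R^{T-L+1}$ then yields $\mathbb{1}^\top g = 1$ and $H_L(w_d) g = w$. The main technical hurdle is the rank identity for $H_{n+L}(\tilde u_d)$: it is precisely there that the newly introduced affine notion of persistence of excitation does exactly enough work to replace the classical PE of order $n+L+1$ used in \cite{martinelli2022data,berberich2022linear} by the weaker affine PE of order $n+L$.
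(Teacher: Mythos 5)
Your argument is correct, but it takes a genuinely different route from the paper. The paper proves Theorem~\ref{thm:fundamental_lemma_ATI_systems} by first passing to a minimal affine state-space representation via Theorem~\ref{thm:affine states vs linear states} and then running the argument of Theorem~\ref{thm:fundamental_lemma_state-space_ATI_systems}: a contradiction argument based on the Cayley--Hamilton theorem establishes the rank condition on $\col(H_1(x_d),H_L(u_d),\mathbb{1}^{\transpose})$, and an explicit multiplication operator $\mathsf{M}_L$ (built from the extended observability and convolution matrices) converts that rank condition into the affine-hull parameterization. You instead difference the data, $\tilde w_d:=\sigma w_d - w_d$, observe the factorization $H_{n+L}(\tilde u_d)=H_{n+L}(u_d)D$ with $\Image D=\Ker\mathbb{1}^{\transpose}$, and show by a dimension count that the affine excitation condition~\eqref{eq:persistence_of_excitation_affine} of order $n+L$ is exactly what makes $\tilde u_d$ persistently exciting of order $n+L$ for $\L$; the classical fundamental lemma applied to the controllable difference system $\B_\textup{dif}$ (legitimate here via Theorems~\ref{thm:lin+offset} and~\ref{thm:io}, Corollary~\ref{cor:controllability-of-affine-systems}, and the equality of integer invariants) then gives $\B_\textup{dif}|_{[1,L]}=\Image H_L(\tilde w_d)$, and the identity $H_L(\tilde w_d)=H_L(w_d)\tilde D$ with $\mathbb{1}^{\transpose}\tilde D=0$ converts the linear combination into an affine one. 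Your dimension count is sound: with $N=T-n-L+1$, one has $\rank(H_{n+L}(u_d)D)=\dim\Ker\mathbb{1}^{\transpose}-\dim\Ker\bigl[\begin{smallmatrix}H_{n+L}(u_d)\\ \mathbb{1}^{\transpose}\end{smallmatrix}\bigr]=(N-1)-(N-m(n+L)-1)=m(n+L)$, which uses both the full row rank of $H_{n+L}(u_d)$ and the exclusion of $\mathbb{1}^{\transpose}$ from its row space, i.e., precisely the affine condition. What each approach buys: the paper's state-space route additionally delivers the rank condition~\eqref{eq:rank_condition_aff} on the state-augmented data matrix, which is of independent interest and is stated as part (i) of Theorem~\ref{thm:fundamental_lemma_state-space_ATI_systems}; your route is representation-free, uses the \gls{LTI} fundamental lemma as a black box, and makes transparent why order $n+L$ for $\A$ (rather than $n+L+1$ for $\L$) is the right threshold. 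One minor point: the behavioral version of the classical result you need is \cite[Theorem~1]{willems2005note} rather than the state-space formulation of \cite{vanWaarde2020willems}, but this is a citation detail, not a gap.
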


Theorem~\ref{thm:fundamental_lemma_ATI_systems} establishes that every finite trajectory of an \gls{ATI} system can be constructed from a single sufficiently exciting trajectory, thereby extending the fundamental lemma beyond the linear case. 
For a system described by a minimal state-space representation~\eqref{eq:state-space-affine}, we provide a reformulation in line with \cite[Theorem~1]{vanWaarde2020willems},  highlighting that our extension aligns with established results for \gls{LTI} systems.  

\begin{theorem}[Fundamental lemma for state-space \gls{ATI} systems] \label{thm:fundamental_lemma_state-space_ATI_systems}
Consider the system~\eqref{eq:state-space-affine} and assume that the pair $(A, B)$ is controllable. Let 
\(u_d \in (\R^m)^{\mathbf{T}}\),
\(x_d \in (\R^n)^{\mathbf{T}}\),
\(y_d \in (\R^p)^{\mathbf{T}}\)  be an input/state/output trajectory of system~\eqref{eq:state-space-affine}. Assume that the input $u_d$ is persistently exciting of order \(n + L\) for the model class $\A$. Then the following statements hold.
\begin{itemize}
    \item[(i)] The rank condition
    \beq  \label{eq:rank_condition_aff}
    \rank \, 
    \scalebox{1}{$
    \bma  
    \begin{array}{c}
    H_1(x_d) \\
    H_L(u_d) \\
    \mathbb{1}^{\transpose}
    \end{array}
    \ema = mL + n + 1$}
    \eeq
    holds.
    \item[(ii)] Every trajectory $\left[\begin{smallmatrix} u \\ y\end{smallmatrix}\right] \in \R^{(m+p)\mathbf{L}}$ of
    system~\eqref{eq:state-space-affine} of length \(L\) can be expressed as
    \[
    \begin{bmatrix}
    u \\
    y
    \end{bmatrix}
    =
    \begin{bmatrix}
    {H}_L(u_{d}) \\
    {H}_L(y_{d})
    \end{bmatrix} g,
    \]
    for some \(g \in \R^{T-L+1}\) such that $\mathbb{1}^{\transpose} g = 1$.
\end{itemize}
\end{theorem}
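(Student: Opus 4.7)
The plan is to reduce part (i) to the classical \gls{LTI} fundamental lemma applied to the first-difference system, and to derive part (ii) from Theorem~\ref{thm:fundamental_lemma_ATI_systems}. For any sequence $s$, let $\tilde{s}(k):=s(k+1)-s(k)$. Subtracting the state equation~\eqref{eq:state-space-affine} at consecutive times cancels the offset $E$, so $(\tilde{u}_d,\tilde{x}_d)$ is an input--state trajectory of the \gls{LTI} difference system $\sigma\tilde{x}=A\tilde{x}+B\tilde{u}$, which inherits controllability from $(A,B)$.

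The crux is an equivalence between affine and linear persistence of excitation under first differencing: for any $N$, $u_d$ is persistently exciting of order $N$ for $\mathcal{A}$ if and only if $\tilde{u}_d$ is persistently exciting of order $N$ for $\mathcal{L}$. I would prove this by elementary column operations on $\bsm H_N(u_d) \\ \mathbb{1}^{\transpose} \esm$: subtracting consecutive columns eliminates the row of ones and produces the columns of $H_N(\tilde{u}_d)$ augmented by a zero row. Since the column space is spanned by the first column of the original matrix together with these differences, and the first column has a nonzero entry in the last coordinate while the differences vanish there, one obtains $\rank \bsm H_N(u_d) \\ \mathbb{1}^{\transpose} \esm = 1 + \rank H_N(\tilde{u}_d)$, from which the equivalence follows.

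For part (i), I would suppose that a vector $v=\col(\alpha,\beta,\gamma)\in\R^{n+mL+1}$ lies in the left kernel of $\bsm H_1(x_d) \\ H_L(u_d) \\ \mathbb{1}^{\transpose} \esm$, write the resulting orthogonality relation for columns $k$ and $k+1$, and subtract to cancel $\gamma$. This yields $\alpha^{\transpose}\tilde{x}_d(k) + \beta^{\transpose}\col(\tilde{u}_d(k),\dots,\tilde{u}_d(k+L-1)) = 0$ for $k=1,\dots,T-L$, so $\col(\alpha,\beta)$ lies in the left kernel of $\bsm H_1(\tilde{x}_d) \\ H_L(\tilde{u}_d) \esm$. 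By the equivalence above, $\tilde{u}_d$ is \gls{LTI} persistently exciting of order $n+L$, so the classical fundamental lemma~\cite{vanWaarde2020willems} applied to the controllable \gls{LTI} difference system forces $\alpha=0$ and $\beta=0$; substituting back into any column equation then gives $\gamma=0$, establishing the rank condition.

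Part (ii) follows by applying Theorem~\ref{thm:fundamental_lemma_ATI_systems} to $\mathcal{B}$: controllability is provided by Corollary~\ref{cor:controllability-of-affine-systems}, and since $\mathbf{n}(\mathcal{B})\le n$, the assumed affine persistence of excitation of $u_d$ of order $n+L$ specializes (by the standard monotonicity of \gls{LTI} persistence of excitation, which transfers via the rank identity above) to order $\mathbf{n}(\mathcal{B})+L$, as required. The main obstacle is the rank-equivalence step between the two notions of persistence of excitation under first differencing; once it is in place, the remainder is a routine invocation of the classical \gls{LTI} fundamental lemma on the difference system.
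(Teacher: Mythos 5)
Your proof of part (i) is correct but takes a genuinely different route from the paper's. The paper argues by contradiction directly on the affine data: it propagates a putative left-null vector $(\nu,\eta,\epsilon)$ through the affine state updates, collapses the resulting block rows with the Cayley--Hamilton theorem, and then peels off $\eta=0$, $\nu B=\nu AB=\cdots=\nu A^{n-1}B=0$, hence $\nu=0$ by controllability and finally $\epsilon=0$. You instead difference the data to cancel the offset $E$, establish the identity $\rank \bsm H_N(u_d) \\ \mathbb{1}^{\transpose} \esm = 1+\rank H_N(\tilde u_d)$ by column operations (this is correct: consecutive column differences are exactly the columns of $H_N(\tilde u_d)$ padded with a zero, and the first column is independent of them because of its last entry), and then invoke the classical \gls{LTI} fundamental lemma for the controllable difference system to kill $(\alpha,\beta)$, after which $\gamma=0$ follows from any single column equation. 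This is shorter, avoids redoing the Cayley--Hamilton bookkeeping, and yields as a by-product an exact order-preserving correspondence between affine and linear persistence of excitation under first differencing --- a sharper statement than the paper's Lemma~\ref{lemma:PE-relationship}, which only relates affine PE of order $L$ to linear PE of order $L+1$.

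Part (ii), however, contains a genuine gap: it is circular relative to the paper's logical structure. Theorem~\ref{thm:fundamental_lemma_ATI_systems} is not proved independently --- its proof passes to a minimal state-space realization and then explicitly defers to the argument of Theorem~\ref{thm:fundamental_lemma_state-space_ATI_systems}. Citing it to obtain Theorem~\ref{thm:fundamental_lemma_state-space_ATI_systems}(ii) therefore assumes the very parameterization you are asked to prove. The missing content is the passage from the rank condition (i) to the affine span property: one must show that every length-$L$ trajectory factors as $\col(u,y)=\mathsf{M}_L\col(x(1),u,1)$ through the extended observability and convolution matrices (with an extra column $\mathsf{T}_L(A,E,C,F)\mathbb{1}$ carrying the offsets $E$ and $F$), that the data Hankel matrices satisfy the same factorization, and that by (i) every vector $\col(x(1),u,1)$ is an affine combination (with weights summing to one) of the columns of the data matrix. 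None of this is supplied by your differencing argument, so part (ii) must be written out directly; your monotonicity remark about reducing the excitation order from $n+L$ to $\mathbf{n}(\mathcal{B})+L$ is fine but does not repair the circularity.
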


\noindent
Theorems~\ref{thm:fundamental_lemma_ATI_systems} and~\ref{thm:fundamental_lemma_state-space_ATI_systems} inherently leverage the rank condition
\begin{equation}\label{eq:ATI-GPE-first}
\rank \begin{bmatrix}
    {H}_L(w_{d}) \\
    \mathbb{1}^{\transpose}
    \end{bmatrix} = d_L+1,
\end{equation}
where $d_L$ is defined as in Lemma~\ref{lemma:integer-invariants}. 
By Lemma~\ref{lemma:integer-invariants}, if $L\geq\boldsymbol{\ell}(\B)$, the condition~\eqref{eq:ATI-GPE-first} reduces to 
\begin{equation}\label{eq:ATI-GPE}
    \rank \begin{bmatrix}
    {H}_L(w_{d}) \\
    \mathbb{1}^{\transpose}
    \end{bmatrix} = \mathbf{m}(\B)L + \mathbf{n}(\B)+1.
\end{equation}
This condition is referred to as the \emph{generalized affine persistency of excitation} (GAPE) condition. Under this condition, we can use elementary linear algebra to find an integer $g\in\N$, matrices $R_0,\ldots,R_L\in\mathbb{R}^{g\times q}$, and a vector $c\in\mathbb{R}^g$ such that, for any trajectory $w$ of length \(L\) of the system $\B\in\A^q$, we have 
\[ \begin{bmatrix} R_0 & \cdots & R_L\end{bmatrix} {w} =c  
\]
if and only if
\[ 
{w} = {H}_L(w_{d}) g, \text{ and }\mathbb{1}^{\transpose} g = 1.  
\]
This, in turn, allows one to define a polynomial matrix 
\[R(\xi):= R_0+\cdots+R_L\xi^L\] 
and conclude that 
\[ \B = \Ker_c R(\sigma ). \]
In other words, the behavior of a controllable \gls{ATI} system can be inferred from a single experiment performed using an input that is persistently exciting of order greater than or equal to $\mathbf{n}(\B)+ \boldsymbol{\ell}(\B)$ for the model class $\A$. 
 The same construction can be extended to accommodate multiple experiments, in the spirit of \cite{vanWaarde2020willems}, where trajectories from different experiments are used to enrich the data matrix. 

\begin{remark}[Sufficiency of the excitation conditions] \label{rem:sufficiency-of-the-excitation-conditions}
The persistence of excitation conditions stated in Theorems~\ref{thm:fundamental_lemma_ATI_systems} and~\ref{thm:fundamental_lemma_state-space_ATI_systems} are sufficient but not necessary, consistent with the linear theory. In practice, input signals of lower excitation order may still generate trajectories that capture the system’s behavior, yielding informative data matrices and substantially reduced data requirements. Section~\ref{sec:example} illustrates this point with a numerical example.
\end{remark}

\section{Discussion} \label{sec:discussion}

This section examines the implications of our results for affine systems and the associated notion of persistence of excitation. First, we establish that the class of persistently exciting inputs for affine systems is strictly larger than that for linear systems: the affine and linear conditions are not equivalent. This distinction translates directly into reduced data requirements for identification and control. Then, we quantify this reduction by comparing the minimal sequence lengths needed in each case and highlight the gap. By revisiting recent literature, particularly the fundamental lemma for affine systems introduced in~\cite{martinelli2022data}, we show that their overly conservative conditions stem from an implicit lifting of the affine system to a higher-order linear one. This viewpoint aligns with observations in~\cite{Markovsky2025Affine}, and naturally explains the appearance of  ``lifted'' trajectories of the form $\left[\begin{smallmatrix} w \\ 1\end{smallmatrix}\right]$.  Finally, we offer a geometric interpretation of this lifting operation. Affine behaviors may be viewed as ``slices'' of linear behaviors in projective space, where additive offsets correspond to homogenization in the extended variable. This projective perspective not only clarifies the algebraic structure of affine systems, but also motivates \textit{a fortiori} the rank conditions underlying our main results.

\subsection{Distinct model classes, distinct excitation conditions}

Affine and linear systems admit different structural descriptions. It is therefore natural that the corresponding persistence of excitation conditions also differ. In particular, persistence of excitation conditions for the affine model class $\A$ are \textit{not} equivalent to those for the linear model class $\L$: one does not automatically imply the other.   The following lemma clarifies some aspects of their relationship.

\begin{lemma}[Affine  and linear persistence of excitation] \label{lemma:PE-relationship}
Let $u \in (\R^m)^{\mathbf{T}}$ and $L \in \mathbf{T}$. If $u$ is persistently exciting of order $L$ for $\A$, then it is persistently exciting of order $L$ for $\L$. Conversely, if $u$ is persistently exciting of order $L+1$ for $\L$, then it is persistently exciting of order $L$ for $\A$.
\end{lemma}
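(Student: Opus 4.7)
The plan is to prove the two implications separately. Both reduce to elementary rank manipulations on the Hankel matrices $H_L(u)$ and $H_{L+1}(u)$; no system-theoretic machinery is needed beyond the two rank definitions~\eqref{eq:persistence_of_excitation_linear} and~\eqref{eq:persistence_of_excitation_affine}.

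For the first implication, I would simply observe that appending the single row $\mathbb{1}^{\transpose}$ to $H_L(u)$ can increase the rank by at most one. Hence if the augmented matrix has rank $mL+1$, then $H_L(u)$ already has rank at least $mL$; since it has exactly $mL$ rows, equality holds, which is precisely persistence of excitation of order $L$ for $\mathcal{L}$.

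For the converse, the key structural observation is that the top $mL$ rows of $H_{L+1}(u)$ coincide with the submatrix obtained by keeping the first $T-L$ columns of $H_L(u)$. Hence, assuming $\rank\, H_{L+1}(u) = m(L+1)$, its top $mL$ rows are already linearly independent, which immediately forces $\rank\, H_L(u) = mL$. It remains to show that $\mathbb{1}^{\transpose}$ does not lie in the row span of $H_L(u)$. I would argue by contradiction: suppose there exist $\alpha_1, \ldots, \alpha_L \in \mathbb{R}^m$ such that $\sum_{i=1}^{L} \alpha_i^{\transpose} u(t+i-1) = 1$ for every admissible $t$. Subtracting the identity written at time $t+1$ from the one written at time $t$ produces a telescoping relation of the form $[\alpha_1^{\transpose},\, (\alpha_2-\alpha_1)^{\transpose},\, \ldots,\, (\alpha_L-\alpha_{L-1})^{\transpose},\, -\alpha_L^{\transpose}]\, H_{L+1}(u) = 0$. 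The full-row-rank assumption on $H_{L+1}(u)$ then forces the telescoped coefficient vector to vanish, which in turn forces all $\alpha_i = 0$, contradicting $\mathbb{1}^{\transpose} \neq 0$.

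The only real obstacle is verifying that the telescoped coefficient vector is genuinely non-zero whenever some $\alpha_i$ is non-zero, so that the contradiction with the full-rank hypothesis is legitimate. This is the linear-algebra fact that the map $(\alpha_1, \ldots, \alpha_L) \mapsto (\alpha_1,\, \alpha_2-\alpha_1,\, \ldots,\, \alpha_L - \alpha_{L-1},\, -\alpha_L)$ is injective, which is immediate by reading off $\alpha_1$ from the first block and then propagating the identities $\alpha_{i+1} = \alpha_i$ down the chain. Beyond this small check, the argument is a routine exercise in Hankel-matrix manipulation.
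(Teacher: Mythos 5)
Your proof is correct and follows essentially the same route as the paper's: both handle the forward direction by the trivial observation that deleting the row $\mathbb{1}^{\transpose}$ cannot drop the rank below $mL$, and both reduce the converse to showing $\rank\, H_L(u)=mL$ together with $\mathbb{1}\notin\Image\, H_L(u)^{\transpose}$. The only difference is that the paper delegates the latter fact to \cite[Proposition~5]{martinelli2022data}, whereas your telescoping argument (whose injectivity check is correct) proves it directly, giving a self-contained version of the same proof.
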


\noindent
Next, we present an example showing that a sequence can be persistently exciting of a given order for the model class $\L$, but not for the model class $\A$.

\begin{example}[A\,persistently\,exciting\,sequence\,for\,$\L$\,but\,not\,for\,$\A$]
    Consider the scalar sequence 
    $$ u = (1, 2, 1, 2, 1, 2, \ldots ). $$
    The sequence is persistently exciting of any order $L\leq 3$ for the model class $\L$. However, it is only persistently exciting of order $L\leq 2$ for the model class $\A$.
\end{example}

\noindent
For the model class~$\A$, persistence of excitation is determined by a rank condition on a Hankel matrix with an additional row of ones, reflecting the constant component of the signal. Viewed in the frequency domain, this term captures the zero-frequency harmonic, whereas the remaining rows of the Hankel matrix correspond to nonzero-frequency components. Accordingly, if the input~$u$ contains no zero-frequency (constant) component, its order of persistence of excitation for the model class~$\A$ coincides with that for the model class~$\L$. Consequently, all familiar classes of persistently exciting signals for the model class~$\L$ extend naturally to the model class~$\L$  once the zero-frequency contribution is removed.
 
\subsection{Data requirements of different excitation conditions}

We now compare the minimal input lengths required to guarantee persistence of excitation for the model classes $\L$ and $\A$, respectively. A necessary condition for ${u \in (\mathbb{R}^{m})^\mathbf{T}}$ to be persistently exciting of order ${L\in \mathbf{T}}$ for the model class $\mathcal{L}$ is
\[
T - L + 1 \geq mL.
\]
 
\noindent 
By defining   $T_{L}(\mathcal{L})$ to be the minimal sequence length required to satisfy the persistence of excitation  condition,   we require
\[
T \geq T_{L}(\mathcal{L}) := (m+1)L - 1.
\]
Similarly, a necessary condition for ${u \in (\mathbb{R}^{m})^\mathbf{T}}$ to be persistently exciting of order ${L\in \mathbf{T}}$ for the model class $\mathcal{A}$ is
\[
T \geq T_{L}(\mathcal{A}) := (m+1)L  -1    .
\]
Theorem~\ref{thm:martinelli22} characterizes all trajectories of length \(L\) arising from a \gls{ATI} system as affine combinations
of the columns of a data matrix. However, this result relies on the input being persistently exciting of order \(n + L + 1\) for the model class $\L$, which imposes   the minimum data sequence length 
\[
T_{n + L + 1}(\mathcal{L}) = (m + 1)(n + L + 1) - 1.
\]
By contrast, Theorem~\ref{thm:fundamental_lemma_ATI_systems} establishes that all trajectories of length \(L\) arising from a \gls{ATI} system as affine combinations of the columns of the same data matrix using input sequence that are persistently exciting of order \(n + L\) for the model class $\A$, which imposes  the minimum data sequence length 
\[
T_{n+L}(\mathcal{A}) = (m+1)(n+L) -1 .
\]
The resulting sample complexity reduction is
\[
T_{n+L+1}(\mathcal{L}) - T_{n+L}(\mathcal{A}) = m+1.
\]
The sample complexity reduction scales linearly with the number of inputs~$m$, and can therefore become significant in multi-input systems. Furthermore, as discussed in Remark~\ref{rem:sufficiency-of-the-excitation-conditions}, the persistence of excitation conditions used in Theorems~\ref{thm:fundamental_lemma_ATI_systems} and~\ref{thm:fundamental_lemma_state-space_ATI_systems} are sufficient, but not necessary. Section~\ref{sec:example} illustrates, through a numerical example, that input sequences of lower excitation order may still generate trajectories that allow one to reproduce the behavior of a system, yielding informative data matrices with fewer samples.

\subsection{The projective geometry of affine systems}

Affine geometry generalizes linear geometry by preserving directional relationships between points and discarding the need for a fixed origin~\cite{eisenbud1995commutative}.  Affine spaces are precisely the sets preserved under \textit{affine transformations}, that is, maps that preserve affine combinations, defined by weighted sums whose coefficients sum to the multiplicative identity of the underlying field. In the Euclidean space \(\mathbb{R}^q\), such transformations take the form \(f(w) = Lw + \wo\), with \(L \in \mathbb{R}^{q \times q}\) and \(\wo \in \mathbb{R}^q\). These maps preserve collinearity and ratios of oriented line segments, but do not necessarily preserve distances or angles.

A classical way to study affine sets is to lift them to linear subspaces via \emph{homogenization}~\cite{eisenbud1995commutative}. For example, in the Euclidean space \(\mathbb{R}^q\), the map
\beq \label{eq:map_homogeneization}
w \mapsto  \begin{bmatrix} w \\ 1 \end{bmatrix}
\eeq
embeds \(\mathbb{R}^q\) into \(\mathbb{R}^{q+1}\), mapping affine sets of \(\mathbb{R}^q\) into linear subspaces of \(\mathbb{R}^{q+1}\). This construction is central in \textit{projective} geometry~\cite{eisenbud1995commutative}, where projective space \(\mathbb{P}^q\) is defined as the set of all lines through the origin in \(\mathbb{R}^{q+1}\). Points in \(\mathbb{R}^q\) are identified with equivalence classes of nonzero vectors in \(\mathbb{R}^{q+1}\) under scalar multiplication, defined as 
\[
[w] = \{ \lambda w : \lambda \in \mathbb{R} \setminus \{0\},\; w \in \mathbb{R}^{q+1} \setminus \{0\} \}.
\]
An affine space is then realized as the chart of \(\mathbb{P}^q\) corresponding to the slice defined by \(w_{q+1} \neq 0\), with the identification
\[
[w_1, \dots, w_q, w_{q+1}] \mapsto \left(\frac{w_1}{w_{q+1}}, \dots, \frac{w_q}{w_{q+1}}\right).
\]
In this chart, affine subspaces correspond to projective subspaces not intersecting the hyperplane at infinity
$$ \mathsf{H} = \{ w \in \mathbb{R}^{q+1} : w_{q+1} = 0 \}.$$ 
The resulting homogeneous representation describes affine relations as linear relations in projective coordinates. The homogenization map is injective and its image intersects each equivalence class \([w] \in \mathbb{P}^q\) precisely once in the chart defined by \(w_{q+1} = 1\), as illustrated in Figure~\ref{fig:homogenization}.

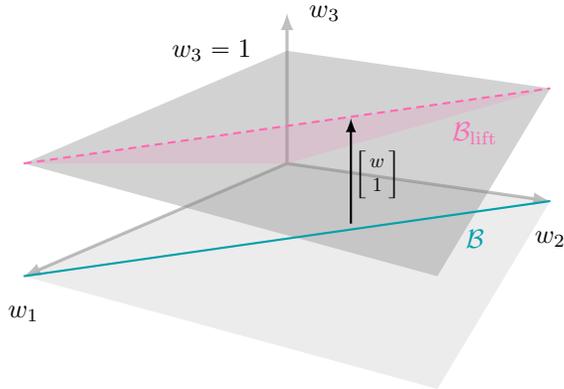
\begin{figure}[ht]
\centering
\begin{tikzpicture}

\draw[-latex, very thick, gray!50] (-2.5,4.5) -- (-2.5,6.5);
\draw[-latex, very thick, gray!50] (-2.5,4.5) -- (1,4);
\draw[-latex, very thick, gray!50] (-2.5,4.5) -- (-6,3);

\fill[gray, opacity=0.15] (-6,3) -- (-0.5,1.5) -- (1,4) -- (-2.5,4.5) -- cycle;
\fill[gray, opacity=0.35] (-6,4.5) -- (-0.5,3) -- (1,5.5) -- (-2.5,6) -- cycle;

\draw[thick, coolTeal] (-6,3) -- (1,4);
\draw[thick, densely dashed, hotPink] (-6,4.5) -- (1,5.5);

\fill[hotPink, opacity=0.15] (1,5.5) -- (-2.5,4.5) -- (-6,4.5) -- cycle;

\node[text=coolTeal] at (0,3.5) {$\mathcal{B}$};

\node[text=black] at (-3.5,6) {$w_3=1$};

\node[text=hotPink] at (0,4.9) {$\mathcal{B}_{\textup{lift}}$};
\node[black] at (-6,2.5) {$w_1$};
\node[black] at (1,3.5) {$w_2$};
\node[black] at (-2,6.5) {$w_3$};

\draw[-latex,  thick, black] (-1.65,3.7) -- (-1.65,5.1);
\node[black] at (-1.3,4.35) {\scalebox{0.75}{$\left[\!\!\begin{array}{c} w \\ 1 \end{array}\!\!\right]$}};
\end{tikzpicture}
\caption{An affine subspace \(\textcolor{coolTeal}{\mathcal{B}} \subseteq \mathbb{R}^2\) (solid) is embedded into a linear subspace \(\textcolor{hotPink}{\mathcal{B}_{\textup{lift}}} \subseteq \mathbb{R}^3\) (shaded) through the map~\eqref{eq:map_homogeneization}, whose image intersects each equivalence class \([w] \in \mathbb{P}^2\) exactly once in the chart defined by \(w_{3} = 1\) (dashed).}
\label{fig:homogenization}
\end{figure}

In behavioral systems theory, this construction provides a systematic tool for analyzing behaviors that define affine trajectory spaces. Once lifted via homogenization, such spaces admit linear representations and can be analyzed using the full machinery of linear system theory. For instance, given a sequence
$u \in (\mathbb{R}^m)^{\mathbf{T}}$ and the corresponding Hankel matrix 
\({H}_L(w_{d}) \in \mathbb{R}^{q \times T}\), with $L\in\mathbf{T}$,  the matrix
\[
\begin{bmatrix} {H}_L(u) \\ \mathbb{1}^\top \end{bmatrix} \in \mathbb{R}^{(mL+1) \times (T-L+1)}
\]
can be used to verify rank conditions and assess persistence of excitation—analogous to the linear case.

A further expression of this idea is found in the role played by the homogenization map~\eqref{eq:map_homogeneization} in representing \gls{ATI} systems. In Theorems~\ref{thm:imp rep} and \ref{thm:affine states vs linear states} as well as in Corollary~\ref{thm:affine_IO_representations}, this map enables a one-to-one correspondence between affine representations and those of the associated lifted linear systems. Theorem~\ref{thm:imp rep}, viewed through this lens, can be reformulated as follows.

\begin{lemma}[Homogeneous affine representations]
For every system \(\mathcal{B} \in \mathcal{A}^q\), there exist \(g \in \N \) and a polynomial matrix \(P(\xi) \in \mathbb{R}^{g \times (q+1)}[\xi]\) such that
\[
\mathcal{B} = \left\{ w \;\middle|\; P(\sigma) \begin{bmatrix} w \\ 1 \end{bmatrix} = 0 \right\}.
\]
\end{lemma}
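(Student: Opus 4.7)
The plan is to reduce this statement to Theorem~\ref{thm:imp rep} by a simple algebraic maneuver: rewriting the nonhomogeneous equation $R(\sigma) w = c$ as a homogeneous equation by absorbing the constant $c$ into an augmented polynomial matrix acting on the lifted trajectory $\bsm w \\ 1 \esm$. Concretely, I would invoke Theorem~\ref{thm:imp rep} to obtain an affine kernel representation $\mathcal{B} = \Ker_c\, R(\sigma)$ with $R(\xi)\in\mathbb{R}^{g\times q}[\xi]$ and $c\in\mathbb{R}^g$, and then define the polynomial matrix
\[
P(\xi) := \begin{bmatrix} R(\xi) & -c \end{bmatrix} \in \mathbb{R}^{g\times (q+1)}[\xi],
\]
where $-c$ is viewed as a constant polynomial column of degree zero.

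Next I would unpack the action of $P(\sigma)$ on the lifted trajectory. Interpreting $\bsm w \\ 1 \esm$ as the sequence in $(\mathbb{R}^{q+1})^{\mathbb{Z}}$ whose $(q+1)$-st component is the constant-one sequence, the linearity of the shift operator and the fact that $-c$ has degree zero give
\[
P(\sigma)\begin{bmatrix} w \\ 1 \end{bmatrix} = R(\sigma) w - c.
\]
Therefore $P(\sigma)\bsm w \\ 1 \esm = 0$ holds if and only if $R(\sigma) w = c$, i.e., if and only if $w\in\mathcal{B}$. This yields the claimed representation with $g$ equal to the row dimension of any affine kernel representation of $\mathcal{B}$ furnished by Theorem~\ref{thm:imp rep}.

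There is no significant obstacle here; the only care needed is notational, namely to consistently interpret the constant $1$ as the constant-one scalar sequence in $(\mathbb{R})^{\mathbb{Z}}$ so that $P(\sigma)$, viewed as a linear time-invariant operator, acts coherently on the augmented trajectory. One could additionally observe that this construction is canonical in the sense that different choices of affine kernel representation (see Appendix~\ref{sec:equivalence}) give rise to equivalent homogeneous representations $P(\xi)$, but this is not required by the statement.
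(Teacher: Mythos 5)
Your proof is correct and is essentially the argument the paper intends: the lemma is presented there as a direct reformulation of Theorem~\ref{thm:imp rep} via the homogenization map, obtained exactly as you do by setting \(P(\xi) = \begin{bmatrix} R(\xi) & -c \end{bmatrix}\) and observing that \(P(\sigma)\bsm w \\ 1 \esm = R(\sigma)w - c\). The only cosmetic caveat is that Theorem~\ref{thm:imp rep} (and the existence of an affine kernel representation) is stated for non-empty behaviors, so the empty behavior would need the separate choice \(P(\xi) = \begin{bmatrix} 0_{1\times q} & 1 \end{bmatrix}\) --- an edge case the paper's own statement also glosses over.
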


\noindent
A parallel construction applies to state-space representations, as  preliminarily  discussed in~\cite{Markovsky2025Affine}. Given an \gls{ATI} system \(\mathcal{B} \in \mathcal{A}^q\) with minimal state space representation 
\[
\mathcal{B} = \left\{ w = \begin{bmatrix} u \\ y \end{bmatrix} \;\middle|\; \exists x \text{ s.t. } \begin{aligned}
\sigma x &= Ax + Bu + E, \\
y &= Cx + Du + F
\end{aligned} \right\},
\]
define the lifted \gls{LTI} system \(\mathcal{B}_{\mathrm{lift}} \in \mathcal{L}^q\) as
\[
\sigma \chi = \begin{bmatrix} A & E \\ 0_{1 \times n} & 1 \end{bmatrix} \chi + \begin{bmatrix} B \\ 0 \end{bmatrix} u, 
\quad 
y = [C \;\; F] \chi + D u.
\]
The lifted system \(\mathcal{B}_{\mathrm{lift}}\) is uniquely determined by \(\mathcal{B}\) and is independent of the particular state-space realization. Its relevance lies in the fact that
\[
w = \begin{bmatrix} u \\ y \end{bmatrix} \in \mathcal{B} 
\]
if and only if
\[
w \in \mathcal{B}_{\mathrm{lift}} \text{ and } \chi(0) = \begin{bmatrix} x(0) \\ 1 \end{bmatrix}.
\]
This equivalence allows standard \gls{LTI} techniques to be applied to simulation, analysis, and control of affine systems. The lifted system has order \(n+1\) and possesses an eigenvalue at one. The last row of the state equation enforces
\[
\sigma \chi_{n+1} = \chi_{n+1}, \qquad \chi_{n+1}(0) = 1,
\]
which defines a constant internal signal \(\chi_{n+1} \equiv 1\). In other words,  \(\mathcal{B}_{\mathrm{lift}}\) embeds an \emph{internal model} of a constant, thus structurally enforcing the presence of an offset in a higher-order linear representation.

\section{Example} \label{sec:example}

Consider an \gls{ATI} system described by the state-space representation~\eqref{eq:system_nonlinear_linearization_affine}, with
\[
\begin{aligned}
A &= 
\begin{bmatrix}
1 & 0 \\
0 & 2
\end{bmatrix}, &
B &= 
\begin{bmatrix}
1 \\ 1
\end{bmatrix}, &
E &= 
\begin{bmatrix}
1 \\ 1
\end{bmatrix}.
\end{aligned}
\]
The system is of order $n=2$ with a scalar input ($m=1$). We illustrate that Theorem \ref{thm:fundamental_lemma_state-space_ATI_systems} requires less data than Theorem \ref{thm:martinelli22} to satisfy the rank condition~\eqref{eq:rank_condition_affine}. We also show that the persistence of excitation condition used in Theorem~\ref{thm:fundamental_lemma_state-space_ATI_systems} is sufficient but not necessary.
For simplicity, we fix the trajectory length to~$L = 2$.

First, we generate a scalar input sequence $u$ of length ${T = 9}$. 
By Theorem~\ref{thm:martinelli22}, the value $T_L(\mathcal{L}) = 9$ is the minimal length for an input sequence $u$ to be persistently exciting for the model class~$\mathcal{L}$ of order ${n + L + 1 = 5}$ (see also Section~\ref{sec:discussion}). The sequence is constructed as a normalized sum of a constant offset and of two sinusoids with random amplitudes and phases, yielding
\[
\begin{aligned}
u(1) &= 0.91,&\; u(2) &= 0.41,&\; u(3) &= -0.53,\\
u(4) &= -0.99,&\; u(5) &= -0.65,&\; u(6) &= 0.20,\\
u(7) &= 0.87,&\; u(8) &= 0.97,&\; u(9) &= 0.32.
\end{aligned}
\]
We simulate the system from zero initial conditions, obtaining
\[
\begin{aligned}
x(1) &= 0,&\; x(2) &= 0,&\; x(3) &= 0.28,\\
x(4) &= 1.30,&\; x(5) &= 1.67,&\; x(6) &= 2.96,\\
x(7) &= 3.96,&\; x(8) &= 4.24,&\; x(9) &= 5.16.
\end{aligned}
\]
A direct computation shows that the rank condition~\eqref{eq:rank_condition_affine} holds.

Next, we generate a scalar input sequence $u$ of length ${T = 7}$.
By Theorem~\ref{thm:fundamental_lemma_state-space_ATI_systems}, the value ${T_L(\mathcal{A}) = 7}$ is the minimal data length required  for $u$ to be persistently exciting for the model class~$\mathcal{A}$ of order ${n + L = 4}$ (see also Section~\ref{sec:discussion}). The sequence is constructed as a normalized sum of two sinusoids with random amplitudes and phases (with no constant offset), yielding
\[
\begin{alignedat}{8}
u(1)&=~0.640,&\ u(2)&=-0.323,&\ u(3)&=-1,&\ u(4)&=~0.248,\\
u(5)&=-0.640,&\ u(6)&=~0.323,&\ u(7)&=~1,&\ u(8)&=-0.248.
\end{alignedat}
\]
Simulating the system from zero initial conditions produces
\[
\begin{alignedat}{8}
x(1)&=0,&\ x(2)&=1.640,&\ x(3)&=2.317,&\ x(4)&=2.317,\\
x(5)&=3.565,&\ x(6)&=3.925,&\ x(7)&=5.248,&\ x(8)&=7.248.
\end{alignedat}
\]
A direct computation shows that the rank condition~\eqref{eq:rank_condition_affine} holds.
As discussed in Section~\ref{sec:discussion}, this illustrates that taking into account the affine structure of the system  leads to a sample complexity reduction of ${T_{n+L+1}(\mathcal{L}) - T_{n+L}(\mathcal{A}) = 2}$.

Finally, we generate a scalar input sequence $u$ of length $T = 6$. The sequence is constructed as a \textit{single} sinusoid with random amplitude and phase, yielding
\[
\begin{aligned}
u(1) &= 1,&\; u(2) &= -0.12,&\; u(3) &= -1,\\
u(4) &= 0.12,&\; u(5) &= 1,&\; u(6) &= -0.12.
\end{aligned}
\]
Simulating the system from zero initial conditions produces
\[
\begin{aligned}
x(1) &= 0,&\; x(2) &= 2,&\; x(3) &= 2.88,\\
x(4) &= 2.88,&\; x(5) &= 4,&\; x(6) &= 6 .
\end{aligned}
\]
A direct computation again confirms that the rank condition~\eqref{eq:rank_condition_affine} holds, showing that the persistence of excitation condition used in Theorem~\ref{thm:fundamental_lemma_state-space_ATI_systems} is sufficient, but not necessary.

\section{Conclusion} \label{sec:conclusion}

This paper has studied the interplay between the model classes of linear and affine time-invariant systems within the framework of behavioral system theory. \gls{ATI} behaviors have been characterized through several representations, illustrating how structural parallels with \gls{LTI} systems enable meaningful extensions of standard notions such as controllability and integer invariants.
A structurally justified persistence of excitation condition for affine systems has also been introduced. The condition reflects the geometry of affine behaviors and yields a fundamental lemma that parallels the one for \gls{LTI} systems while reducing data requirements relative to existing results. Our findings emphasize that excitation conditions must be adapted to the model class, as overlooking structural differences leads to unnecessarily conservative data requirements.

This work contributes to the growing body of research extending behavioral systems theory beyond the model class of \gls{LTI} systems.
Many practically relevant systems—including positive, hybrid, and stochastic systems—generate trajectory spaces that lie outside the scope of existing \gls{LTI} results.
The central challenge is to delineate tractable model classes by analyzing how assumptions on trajectory spaces interact with system structure—an interaction that, in system identification and control, naturally manifests through data informativity and representation theorems.

\appendix

\section{Equivalent affine kernel representations}\label{sec:equivalence}

This section establishes the equivalence relation among affine kernel representations, identifying all polynomial matrices and offset vectors that generate the same system. By Theorem~\ref{thm:imp rep} a system $\mathcal{B}\in\mathcal{A}^q$ is uniquely determined by a polynomial matrix $R(\xi)$ and vector $c$. Here, we will characterize all pairs $R(\xi)$ and $c$ which define the same system. As a first observation, we state and prove the following lemma: 
\begin{lemma}[Unimodular invariance]\label{lem:unimod transform}
Let $\mathcal{B} = \Ker_c R(\sigma)$, with $R(\xi) \in \mathbb{R}^{g\times q}[\xi]$ and $c \in \mathbb{R}^{g}$. 
Let $U(\xi) \in \mathbb{R}^{g\times g}[\xi]$, and define 
\[
R'(\xi) = U(\xi)R(\xi), \qquad 
c' = U(1)c, \qquad 
\mathcal{B}' = \Ker_{c'} R'(\sigma).
\]
Then $\mathcal{B} \subseteq \mathcal{B}'$. Moreover, if $U(\xi)$ is unimodular, then $\mathcal{B} = \mathcal{B}'$.
\end{lemma}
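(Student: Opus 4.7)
The plan is to exploit the central fact that $R(\sigma)w = c$ should be read as the assertion that the sequence $R(\sigma)w$ is the \emph{constant trajectory} with value $c$. The key algebraic observation is then that any polynomial $U(\sigma)$ acts on a constant trajectory $\bar{c}$ (i.e., $\bar{c}(t) \equiv c$) by scalar multiplication through $U(1)$: indeed, since $\sigma^k \bar{c} = \bar{c}$ for every $k \in \N$, writing $U(\xi) = \sum_{k=0}^{d} U_k \xi^k$ gives
\[
U(\sigma)\bar{c} \;=\; \sum_{k=0}^d U_k \sigma^k \bar{c} \;=\; \Bigl(\sum_{k=0}^{d} U_k\Bigr)\bar{c} \;=\; U(1)\,\bar{c}.
\]

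For the inclusion $\mathcal{B} \subseteq \mathcal{B}'$, I would take an arbitrary $w \in \mathcal{B}$, so that $R(\sigma)w = c$ as a constant trajectory, and then compute
\[
R'(\sigma)w \;=\; U(\sigma)R(\sigma)w \;=\; U(\sigma)c \;=\; U(1)c \;=\; c',
\]
which is exactly the condition $w \in \mathcal{B}'$. This step uses nothing more than associativity of polynomial operator composition together with the constant-trajectory identity above.

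For the converse inclusion under the unimodularity assumption, I would invoke the existence of a polynomial inverse $U^{-1}(\xi) \in \mathbb{R}^{g \times g}[\xi]$ satisfying $U(\xi)U^{-1}(\xi) = I$. Evaluating this identity at $\xi = 1$ yields $U(1)U^{-1}(1) = I$, so $U^{-1}(1) = U(1)^{-1}$. Then for any $w \in \mathcal{B}'$ we have $R'(\sigma)w = c'$, and applying the same argument with $U^{-1}(\sigma)$ gives
\[
R(\sigma)w \;=\; U^{-1}(\sigma) R'(\sigma) w \;=\; U^{-1}(\sigma) c' \;=\; U^{-1}(1)c' \;=\; U(1)^{-1}U(1)c \;=\; c,
\]
so $w \in \mathcal{B}$.

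There is no substantive obstacle here; the only subtlety worth being explicit about is the distinction between the constant vector $c \in \mathbb{R}^g$ appearing on the right-hand side of $R(\sigma)w = c$ and the constant \emph{trajectory} $\bar{c} \in (\mathbb{R}^g)^{\Z}$ on which $U(\sigma)$ actually acts. Making this identification carefully is what legitimizes the evaluation $U(\sigma)c = U(1)c$ and, through it, both directions of the claim.
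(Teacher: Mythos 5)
Your proof is correct and follows essentially the same route as the paper's: the constant-trajectory identity $U(\sigma)\bar c = U(1)\bar c$, pre-multiplication by $U(\sigma)$ for the inclusion $\mathcal{B}\subseteq\mathcal{B}'$, and application of the polynomial inverse $U^{-1}(\xi)$ for the reverse inclusion. Your explicit remark that $U^{-1}(1)=U(1)^{-1}$ (by evaluating $U(\xi)U^{-1}(\xi)=I$ at $\xi=1$) is a detail the paper leaves implicit, but it is the same argument.
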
 
\begin{proof} 
For any constant trajectory $c$, we have $U(\sigma)c = U(1)c$. 
Therefore, if $R(\sigma)w = c$, pre-multiplying by $U(\sigma)$ immediately yields the first claim. 
For the second part, recall that a unimodular matrix $U(\xi)$ admits a polynomial inverse $U(\xi)^{-1}$. 
Consequently, applying the same argument to $U(\xi)^{-1}$ establishes that ${\mathcal{B}' \subseteq \mathcal{B}}$.
\end{proof}	

We can now generalize \cite[Corollary 3.6.3]{willems1997introduction}, and characterize all affine kernel representations of an \gls{ATI} system. 

\begin{theorem}[Equivalent affine kernel representations] \label{thm:equiv}
	Consider a non-empty system ${\mathcal{B}\in\mathcal{A}^q}$. Assume $\mathcal{B} = \Ker_c  R(\sigma)$, 
	with $R(\xi)\in  \mathbb{R}^{g\times q}[\xi] $ and $c\in\mathbb{R}^{g}$. Let $R'(\xi) \in  \mathbb{R}^{g'\times q}[\xi]$ and $c'\in \mathbb{R}^{g'}$. Then $\mathcal{B}= \Ker_{c'}  R'(\sigma)$ if and only if $R(\xi)$ and $c$ can be transformed into $R'(\xi)$ and $c'$ by repeatedly 
	\begin{itemize}
		\item pre-multiplying $R(\xi)$ both with a unimodular polynomial matrix $U(\xi)$ and $c$ with $U(1)$, or
		\item adding/removing zero-rows from both $R(\xi)$ and $c$. 
	\end{itemize}
\end{theorem}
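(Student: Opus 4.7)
The ``if'' direction follows directly from Lemma~\ref{lem:unimod transform}, together with the trivial observation that adding or removing zero rows from both $R(\xi)$ and $c$ leaves the set $\Ker_c R(\sigma)$ unchanged. The plan for the ``only if'' direction is to reduce both pairs $(R(\xi),c)$ and $(R'(\xi),c')$ to a common minimal form using only the two allowed operations, and then invoke the classical equivalence result for linear kernel representations \cite[Corollary~3.6.3]{willems1997introduction} applied to the difference behavior $\mathcal{B}_\textup{dif}$.

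First, I would show that every consistent pair $(R(\xi),c)$ can be reduced, via the allowed operations, to a minimal pair $(R_\star(\xi), c_\star)$ with $R_\star(\xi)$ of full row rank. By the classical theory, there exists a unimodular $U(\xi)$ such that
\[
U(\xi) R(\xi) = \begin{bmatrix} R_\star(\xi) \\ 0 \end{bmatrix}, \qquad U(1) c = \begin{bmatrix} c_\star \\ \bar c \end{bmatrix},
\]
with $R_\star(\xi)$ of full row rank. The zero rows of $U(\xi) R(\xi)$ impose the pointwise constraint $0 = \bar c$; since $\mathcal{B}$ is non-empty, this forces $\bar c = 0$, so the zero-row operation removes the bottom block to yield the minimal pair $(R_\star(\xi), c_\star)$ representing $\mathcal{B}$. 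Applying the same reduction to $(R'(\xi), c')$ produces a minimal pair $(R'_\star(\xi), c'_\star)$.

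By Theorem~\ref{thm:imp rep}, both $R_\star(\xi)$ and $R'_\star(\xi)$ are minimal kernel representations of the same linear behavior $\mathcal{B}_\textup{dif}$, so \cite[Corollary~3.6.3]{willems1997introduction} supplies a unimodular $V(\xi)$ with $R'_\star(\xi) = V(\xi) R_\star(\xi)$. Lemma~\ref{lem:unimod transform} then gives $\mathcal{B} = \Ker_{V(1) c_\star} R'_\star(\sigma) = \Ker_{c'_\star} R'_\star(\sigma)$, and evaluating $R'_\star(\sigma) w_0$ on any $w_0 \in \mathcal{B}$ (which exists by non-emptiness) in both representations yields $V(1) c_\star = c'_\star$. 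Composing the reduction of $(R, c)$, the unimodular step $V$, and the reverse of the reduction of $(R', c')$ produces the required sequence of allowed transformations.

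The principal obstacle I anticipate is ensuring that the zero-row removal step is consistent with the offset vector: after the unimodular pre-multiplication isolating the zero rows of $R$, the corresponding block of $U(1)c$ must itself vanish so that the zero rows can be removed together with the associated entries of $c$. The non-emptiness hypothesis on $\mathcal{B}$ is precisely what guarantees this, and is in this sense essential to the statement.
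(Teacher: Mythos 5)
Your proposal is correct and follows essentially the same route as the paper's proof: both reduce $(R,c)$ and $(R',c')$ to full-row-rank pairs via a unimodular transformation, use non-emptiness of $\mathcal{B}$ to conclude the offset entries on the zero rows vanish, invoke \cite[Corollary~3.6.3]{willems1997introduction} on the minimal representations of $\mathcal{B}_{\textup{dif}}$, and match the offsets by evaluating on a trajectory $w_0\in\mathcal{B}$. Your closing remark on why non-emptiness is essential is exactly the point the paper's argument hinges on.
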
 
\begin{proof} 
	($\Leftarrow$) The implication follows immediately by Lemma~\ref{lem:unimod transform}, since neither operation changes the induced system.
    
    ($\Rightarrow$) Since $R(\xi)$ is a kernel representation of $\mathcal{B}_{\textup{dif}}$, there exists a unimodular matrix $U_1(\xi)$ such that 
	\[ U_1(\xi)R(\xi) = \begin{bmatrix} R_1(\xi) \\ 0 \end{bmatrix},\]
	where $R_1(\xi)$ has full row rank $g_1$. Define 
    $$U_1(1)c = \begin{bmatrix} c_1 \\ c_1'\end{bmatrix},$$ where $c_1\in\mathbb{R}^{g_1}$. Using Lemma~\ref{lem:unimod transform}, we see that 
	\[\mathcal{B} = \left\lbrace w \mid \begin{bmatrix} R_1(\xi) \\ 0 \end{bmatrix}w = \begin{bmatrix} c_1 \\ c_1'\end{bmatrix}\right\rbrace. \] 
	By assumption, $\mathcal{B}$ is non-empty,  so $c_1'=0$. Since removing zero-rows does not change the system, we can conclude that  
	\[ \mathcal{B} = \left\lbrace w \mid R_1(\xi)w = c_1 \right\rbrace. \] 
	Performing the same procedure \textit{mutatis mutandis} for $$\mathcal{B}=\{ w \mid R'(\sigma)w = c'\},$$ 
    we obtain that 
	\[\mathcal{B} = \left\lbrace w \mid R_2(\xi)w = c_2 \right\rbrace, \] 
	where $R_2(\xi)$ has full row rank $g_2$ and $c_2\in\mathbb{R}^{g_2}$. Note that $R_1(\xi)$ and $R_2(\xi)$ induce minimal kernel representations of $\mathcal{B}_{\textup{dif}}$. Thus $g_1=g_2$. Moreover, we can use \cite[Corollary 3.6.3]{willems1997introduction} to conclude that there exists a unimodular $U_3(\xi)\in\mathbb{R}^{g_1\times g_1}[\xi]$ such that $R_2(\xi)= U_3(\xi)R_1(\xi)$. Let $w\in\mathcal{B}$, then by definition $c_2= R_2(\sigma)w = U_3(\sigma)R_1(\sigma)w = U_3(\sigma)c_1= U_3(1)c_1$. 
	
	Now note that 
	$$
    {R'(\xi) = U_2^{-1}(\xi) \begin{bmatrix} R_2(\xi) \\ 0 \end{bmatrix}, }
    \quad 
    {\begin{bmatrix} R_1(\xi) \\ 0 \end{bmatrix} = U_1(\xi)R(\xi),} 
    $$
	and 
	\[ c' = U_2^{-1}(1) \begin{bmatrix} c_2 \\ 0 \end{bmatrix}, \quad c_2 = U_3(1) c_1, \quad \begin{bmatrix} c_1 \\ 0 \end{bmatrix} = U_1(\xi)c, \]
	thus proving the theorem. 
\end{proof}

\section{Consistency of affine kernel representations} 
\label{sec:consistency}

In this section, we consider the \emph{consistency problem} for affine kernel representations of \gls{ATI} systems.
Given a sequence ${c \in (\R^{g})^{\Z}}$ and a polynomial matrix $R\in\mathbb{R}^{g\times q}[\xi]$, the problem consists in determining if there exists a trajectory ${w \in (\R^{q})^{\Z}}$ such that
\beq \label{eq:affine_behavior}
   R(\sigma)w = c .
\eeq
in which case the corresponding \gls{ATI} behavior $\Ker_c \, R(\sigma)$ is said to be \textit{consistent}.  Equivalently, consistency amounts to deciding whether $c$ belongs to the \emph{image} of the polynomial operator defined by matrix polynomial $R(\xi)$, defined as
\[
   \Image \, R(\sigma) := \{\, v \in (\R^{g})^{\Z} : \, \exists \, w \in (\R^q)^{\Z},\; v = R(\sigma)w \,\}.
\]   
The consistency problem for affine kernel representations is the dynamic analogue of the classical question of determining if a linear system of equations is \textit{consistent}, \ie, if
\beq \label{eq:linear_system}
   A x = b
\eeq
with ${A\in \R^{m \times n}}$ and ${b\in \R^{m}}$, admits a solution ${x\in \R^{n}}$. In the static case, the consistency problem is solved by the Rouch\'e--Capelli theorem: the system\,\eqref{eq:linear_system} is consistent if and only if
\beq \label{eq:rouche-capelli}
   \rank\, A  = \rank\,\bigl[\,A\;\;b\,\bigr].
\eeq
or, equivalently, if and only if $y^\top b = 0$ for all $y \in \ker(A^\top)$. In the dynamic case, consistency depends not on constant row dependencies of $R$, but on \emph{polynomial} dependencies expressed by \emph{syzygies}~\cite{eisenbud1995commutative}. Formally, a polynomial row vector ${\lambda(\xi)\in\R[\xi]^{1\times p}}$ is said to be a \emph{(left) syzygy} of $R(\xi)$ if
\[
   \lambda(\xi)\,R(\xi) = 0.
\]
The set of all such polynomial row vectors defines a submodule
\[
   \syz R(\xi) := \{\, \lambda(\xi)\in\R[\xi]^{1\times p} : \lambda(\xi)R(\xi)=0 \,\} \, \subseteq\, \R[\xi]^{1\times p}.
\]
Consistency of~\eqref{eq:affine_behavior} requires that every polynomial dependency among the rows of $R(\xi)$ is satisfied also by $c$, as formalized by the following result.

\begin{theorem}[Syzygy criterion for consistency]
\label{thm:syzygy-feasibility}
Consider a polynomial matrix  ${R(\xi)\in\R[\xi]^{g\times q}}$ and a sequence ${c \in (\R^{g})^{\Z}}$. Then ${c \in \Image \, R(\sigma)}$ if and only if ${\lambda(\sigma)\,c = 0}$ for all ${\lambda(\xi)\in\syz R[\xi]}$.
\end{theorem}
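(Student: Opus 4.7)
The plan is to prove the two directions separately. The forward direction is immediate: if $c = R(\sigma) w$ and $\lambda(\xi) R(\xi) = 0$, then $\lambda(\sigma) c = \lambda(\sigma) R(\sigma) w = (\lambda(\xi) R(\xi))(\sigma) w = 0$, using that polynomials in the shift $\sigma$ compose according to ordinary polynomial multiplication. The reverse direction is the substantive part and I would handle it via the Smith normal form of $R(\xi)$ over the principal ideal domain $\mathbb{R}[\xi]$.

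Specifically, I would write $U(\xi) R(\xi) V(\xi) = \Sigma(\xi)$, where $U(\xi) \in \mathbb{R}[\xi]^{g\times g}$ and $V(\xi) \in \mathbb{R}[\xi]^{q\times q}$ are unimodular, and $\Sigma(\xi) = \bsm D(\xi) & 0 \\ 0 & 0 \esm$ with $D(\xi)$ a diagonal $r\times r$ matrix of nonzero invariant factors, where $r$ denotes the normal rank of $R$. Partitioning $U(\xi) = \bsm U_1(\xi) \\ U_2(\xi) \esm$ with $U_2$ of size $(g-r)\times g$, I would first show that the rows of $U_2$ generate $\syz R(\xi)$. One containment is immediate from $U_2(\xi)R(\xi)V(\xi)=0$ and the invertibility of $V(\xi)$. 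Conversely, any $\lambda(\xi)\in\syz R(\xi)$ satisfies $\lambda(\xi)U(\xi)^{-1}\Sigma(\xi)=0$, which forces the first $r$ entries of $\lambda U^{-1}$ to vanish (since $D$ has full row rank over $\mathbb{R}(\xi)$), placing $\lambda$ in the row span of $U_2$. By hypothesis $\lambda(\sigma)c=0$ for every $\lambda\in\syz R(\xi)$, so in particular $U_2(\sigma)c=0$.

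Next, I would change variables by setting $w = V(\sigma)\tilde w$, which is a bijection on $(\mathbb{R}^q)^{\mathbb{Z}}$ because $V(\xi)^{-1}$ is polynomial. Multiplying $R(\sigma)w=c$ by $U(\sigma)$ yields the equivalent system $\Sigma(\sigma)\tilde w = U(\sigma)c$, whose bottom block $U_2(\sigma)c$ already vanishes. It remains to solve the decoupled diagonal system $D(\sigma)\tilde w_1 = U_1(\sigma)c$, which splits into $r$ scalar difference equations $d_i(\sigma)\tilde w_{1,i}=b_i$ with $d_i(\xi)\in\mathbb{R}[\xi]$ nonzero. The main technical obstacle is then to show that $p(\sigma)w=b$ admits a solution $w\in(\mathbb{R})^{\mathbb{Z}}$ for arbitrary nonzero $p(\xi)$ and arbitrary bi-infinite $b$. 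I would handle this by factoring $p(\xi)=\xi^k q(\xi)$ with $q(0)\neq 0$ and rewriting the equation as $q(\sigma)w = \sigma^{-k}b$, where $\sigma^{-k}b$ is the well-defined backshift on $\mathbb{Z}$. Since $q$ has nonzero leading and constant coefficients, its recursion propagates both forward and backward in time, so any choice of $\deg q$ consecutive values of $w$ extends uniquely to a bi-infinite solution. Setting $\tilde w_2=0$ and recovering $w = V(\sigma)\tilde w$ produces the desired preimage of $c$ under $R(\sigma)$, completing the argument.
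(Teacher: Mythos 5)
Your proposal is correct and follows essentially the same route as the paper: the forward direction by direct substitution, and the converse via the Smith normal form of $R(\xi)$, reducing to $r$ scalar difference equations $d_i(\sigma)z_i=\tilde c_i$ with $d_i\neq 0$ after observing that the syzygy hypothesis kills the lower block of $U(\sigma)c$. You in fact supply two details the paper leaves implicit — that the rows of $U_2$ generate $\syz R(\xi)$, and that a nonzero scalar polynomial equation $p(\sigma)w=b$ is always solvable over $\Z$ by forward/backward propagation after factoring out $\xi^k$ — both of which are sound.
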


\begin{proof}
($\Rightarrow$) If $c=R(\sigma)w$ for some $w \in (\R^q)^{\Z}$, then
\[
   \lambda(\sigma)c=\lambda(\sigma)R(\sigma)w=0
\]
for all $\lambda(\xi)\in\syz R[\xi]$.

($\Leftarrow$) Suppose ${\lambda(\sigma)c=0}$ for all ${\lambda(\xi)\in\syz R[\xi]}$.  If ${R(\xi)}$ is full row rank or ${R(\xi)=0}$, the claim holds trivially. Otherwise, consider the Smith decomposition
\[
   U(\xi)\,R(\xi)\,V(\xi)
   = \begin{bmatrix} D(\xi) & 0 \\ 0 & 0 \end{bmatrix}
   =:\widetilde R(\xi),
\]
where $U(\xi)\in\mathbb{R}^{g\times g}[\xi]$ and $V(\xi)\in\mathbb{R}^{q\times q}[\xi]$ are unimodular and 
$$D(\xi) = \diag(d_1(\xi),\dots,d_r(\xi)),$$ with ${d_i(\xi)\neq 0}$. 
Let 
$$\tilde c=U(\sigma)c.$$  
The assumption $\lambda(\sigma)c=0$ ensures that the lower block of $\tilde c$ vanishes, \ie, $\tilde c=
\left[\begin{smallmatrix} c_1\\ 0 \end{smallmatrix}\right]$, with $\tilde c_1\in (\R^r)^{\Z}$. Thus, consistency reduces to the possibility of solving $D(\sigma)z_1=\tilde c_1$ for some $z_1\in (\R^r)^{\Z}$. Each $d_i(\sigma)$ acts diagonally on the corresponding component, reducing the problem to $r$ scalar difference equations of the form $d_i(\sigma) z_i = \tilde c_i$, which always admit a solution since ${d_i \neq 0}$.  Choosing $z_1$ accordingly and arbitrary free variables $z_2$, one obtains $z= 
\left[\begin{smallmatrix} z_1 \\ z_2 \end{smallmatrix}\right]$, and then $w:=V(\sigma)z$ satisfies $R(\sigma)w=c$. Therefore, $c\in\Image\, R(\sigma)$.
\end{proof}

\begin{remark}[Computation of syzygies]
In the static case, the Rouch\'e-Capelli rank condition~\eqref{eq:rouche-capelli} yields a direct test implementable with standard linear algebra. By contrast, in the (possibly infinite-dimensional) dynamic case, one must appeal to tools from commutative algebra~\cite{eisenbud1995commutative}. From a module-theoretic perspective, the rows of \(R(\xi)\) span a submodule of \(\R[\xi]^{1\times q}\), and $\syz R[\xi]\subseteq\R[\xi]^{1\times p}$ consists of all polynomial relations among these generators. Because \(\R[\xi]\) is a Euclidean domain (hence, a Noetherian ring\footnote{A \emph{Euclidean domain} is a ring admitting a division algorithm. Every Euclidean domain is a \emph{Noetherian ring}, meaning it satisfies the ascending chain condition on ideals (or, equivalently, all ideals its are finitely generated). Over a Noetherian ring \(R\), every submodule of a finitely generated \(R\)-module is finitely generated~\cite{eisenbud1995commutative}. Thus, for \(R=\R[\xi]\), the syzygy module \(\syz R[\xi]\) is finitely generated.}), \(\syz R[\xi]\) is finitely generated. Thus, generators of \(\syz R[\xi]\) can be computed via Gr\"obner bases  (or, alternatively, by means of the Smith–Popov form). Given a generating set \(\{\lambda_i(\xi)\}_{i\in\mathbf{s}}\) of \(\syz R[\xi]\), consistency of~\eqref{eq:affine_behavior} is equivalent to the finite family of polynomial constraints $\lambda_i(\xi)\,s = 0$, with $i\in\mathbf{s}.$
\end{remark}

\begin{example}[Consistency of affine kernel representations]
Consider the polynomial matrix
\[
  R(\xi) \;=\;
  \begin{bmatrix}
     1 & 0 & 0 \\[3pt]
     0 & 1-\xi & 0
  \end{bmatrix} \in \R[\xi]^{2\times 3}.
\]
The matrix $R(\xi)$ is full row rank over $\R[\xi]$, so  $\syz R(\xi)=\{0\}$.  
Consequently, any given $c \in (\R^{2})^{\Z}$ gives rise to a consistent affine kernel representation $\Ker_c R(\xi)$. By contrast, consistency imposes trajectory constraints whenever there exist non-trivial syzygies.   For example, consider the polynomial matrix
\[
R(\xi)=
\begin{bmatrix}
\xi+1 & \xi & \xi+2\\[2pt]
\xi^2-1 & \xi^2-\xi & \xi^2+\xi-2
\end{bmatrix}\in\R[\xi]^{2\times 3}.
\]
The matrix $R(\xi)$ is \textit{not} full row rank over $\R[\xi]$, since 
\[
\lambda(\xi)=[\,1-\xi\quad 1\,]\in \syz R(\xi) .
\]
Accordingly, any given $c \in (\R^{2})^{\Z}$ is consistent if and only if 
\[
c_2=(\xi-1)c_1 .
\]
For instance, if $c_1(t)=(-1)^t$, then 
$$c_2=(\xi-1)c_1=(-1)^{t+1}-(-1)^t=-2(-1)^t,$$ 
so the corresponding affine kernel representation $\Ker_c R(\xi)$ is consistent. Instead, if  $c_1\equiv 0$ and $c_2\equiv 1$, the constraint $$(\xi-1)c_1-c_2= -1\not\equiv 0$$ 
is not satisfied and, hence, $c$ is and the corresponding affine kernel representation $\Ker_c R(\xi)$ is empty.
\end{example}

Feasibility can be reduced to finite-horizon conditions by considering block-Toeplitz truncations of $R(\sigma)$ and applying rank tests. Let $T\in\N$ and $R(\xi)=\sum_{i=0}^{d}R_i\xi^i$, with $R_i\in\R^{g\times q}$.  Define the block-Toeplitz matrix
\[
  R_T =
  \begin{bmatrix}
    R_0 & R_1 & \cdots & R_d &        & 0\\
    0   & R_0 & \cdots & R_{k-1} & \ddots & \vdots\\
    \vdots &   & \ddots &        & \ddots & R_k\\
    0 & \cdots & 0 & R_0 & \cdots & R_d
  \end{bmatrix}
  \in  \R^{gT\times q(T+k)} .
\]
By restricting~\eqref{eq:affine_behavior} to the time interval $[1,T+d]$ one obtains
\beq \label{eq:RC_trajectories_system}
   R_T\,w|_{[1,T+d]} \;=\; c|_{[1,T]} .
\eeq
By the Rouch\'e--Capelli theorem, system~\eqref{eq:RC_trajectories_system} is consistent if and only if
\beq \label{eq:RC_trajectories}
   \rank \, R_T  = \rank\,[\,R_T \;\; c|_{[1,T]}\,]\,.
\eeq 

\begin{theorem}[Horizon bound via maximal syzygy degree]
\label{thm:horizon}
Consider a polynomial matrix  ${R(\xi)\in\R[\xi]^{g\times q}}$ of degree $d$ and a sequence $c \in (\R^{g})^{\Z}$. Assume there exists
 a generating set \(\{\lambda_i(\xi)\}_{i\in\mathbf{s}}\) of \(\syz R[\xi]\) such that ${\deg \lambda_i(\xi)\le \delta}$ for all  $i\in\mathbf{s}$. 
Then ${c\in\Image\,R(\sigma)}$ if and only if, for all ${\tau\in\Z}$ and $T\ge \delta+1$, there exists $w \in (\R^q)^{\Z}$ such that
\beq \label{eq:shifts}
   R_T\,w|_{[\tau+1,\,\tau+T+d]} \;=\; c|_{[\tau+1,\,\tau+T]} .
\eeq
\end{theorem}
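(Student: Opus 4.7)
The plan is to pivot on Theorem~\ref{thm:syzygy-feasibility}, which recasts membership in $\Image\, R(\sigma)$ as the condition $\lambda(\sigma)c = 0$ for every ${\lambda \in \syz R[\xi]}$, and to exploit the hypothesized degree bound $\delta$ to reduce this infinite-time test to the finite window $T = \delta + 1$. The forward implication ``$\Rightarrow$'' is immediate: if $c = R(\sigma)w^\star$ globally, then $w^\star$ itself certifies~\eqref{eq:shifts} on every window, so no work is required. The substantive content is the converse.

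For ``$\Leftarrow$'', I would first invoke the finite generation of $\syz R[\xi]$ to reduce from all syzygies to the prescribed generators $\{\lambda_i(\xi)\}_{i\in\mathbf{s}}$: since the map $\lambda \mapsto \lambda(\sigma)c$ is $\R[\xi]$-linear and the generators exhaust $\syz R[\xi]$, verifying $\lambda_i(\sigma)c \equiv 0$ for each $i$ entails $\lambda(\sigma)c \equiv 0$ for every ${\lambda \in \syz R[\xi]}$. This reduces the task to a pointwise check at every $\tau \in \Z$, one generator at a time.

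Fix $\tau \in \Z$ and a generator $\lambda_i$ of degree at most $\delta$, and set $T = \delta + 1$. By hypothesis there exists $w \in (\R^q)^\Z$ with $(R(\sigma)w)(t) = c(t)$ for every $t \in [\tau+1, \tau+T]$. Writing $\lambda_i(\xi) = \sum_{j=0}^{\delta} \lambda_{i,j}\xi^j$ and expanding $(\lambda_i(\sigma)c)(\tau+1) = \sum_{j=0}^{\delta} \lambda_{i,j}\,c(\tau+1+j)$, every index $\tau+1+j$ with $0 \le j \le \delta$ lies in $[\tau+1, \tau+T]$, so one may substitute $c(\tau+1+j) = (R(\sigma)w)(\tau+1+j)$ to rewrite the sum as $(\lambda_i(\sigma)R(\sigma)w)(\tau+1)$. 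This vanishes because ${\lambda_i(\xi)R(\xi) \equiv 0}$ as polynomial matrices, hence $\lambda_i(\sigma)R(\sigma) = 0$ as an operator. Letting $\tau$ vary gives $\lambda_i(\sigma)c \equiv 0$, and Theorem~\ref{thm:syzygy-feasibility} then delivers $c \in \Image\, R(\sigma)$.

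The single delicate point, and really the only reason the bound $T \ge \delta+1$ appears, is the joint index bookkeeping: $\lambda_i$ probes $\delta+1$ consecutive samples of $c$, forcing $T \ge \delta+1$ so that local solvability can supply the identifications $c(t) = (R(\sigma)w)(t)$ on the needed range; in turn, the combined stencil $\lambda_i(\sigma)R(\sigma)$ reads $\delta + d + 1 = T + d$ consecutive samples of $w$, exactly matching the window $[\tau+1, \tau+T+d]$ on which $w$ is constrained. There is no obstruction beyond this alignment; the hard work in the surrounding theory, namely producing the generators $\{\lambda_i\}_{i\in\mathbf{s}}$ via a Smith-form or Gr\"obner-basis computation, has already been absorbed into the hypothesis.
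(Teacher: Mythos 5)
Your proof is correct and follows essentially the same route as the paper: use local solvability on a window of length $T=\delta+1$ to evaluate each syzygy against $c$ via the identity $\lambda_i(\sigma)R(\sigma)=0$, then invoke Theorem~\ref{thm:syzygy-feasibility} to globalize. If anything, your version is slightly tighter than the paper's, since you explicitly reduce from arbitrary syzygies to the degree-bounded generators via $\R[\xi]$-linearity and you do not rely on the paper's unnecessary remark about concatenating the local solutions $w^{(t)}$.
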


\begin{proof}
$(\Rightarrow)$: Immediate (by restriction).

$(\Leftarrow)$: Fix $\lambda(\xi)\in\syz R[\xi]$ with $\deg\lambda(\xi)\le\delta$. 
For each $t\in\Z$, applying~\eqref{eq:shifts} with $\tau=t-1$ and $T=\delta$ yields 
a finite sequence $w^{(t)}|_{[t,\,t+d+\delta]}$ such that
\[
   R_T\,w^{(t)}|_{[t,\,t+d+\delta]} = c|_{[t,\,t+\delta]} .
\]
Because $\lambda(\xi)R(\xi)=0$, one obtains
\[
   (\lambda(\sigma) c)(t) = (\lambda(\sigma) R(\sigma) w^{(t)})(t) = 0 .
\]
The assumption that~\eqref{eq:shifts} holds for all time shifts ensures that overlapping 
sequences $w^{(t)}$ and $w^{(t+1)}$ coincide on their common support, allowing a consistent 
concatenation into a sequence $w\in(\R^q)^{\Z}$ satisfying $R(\sigma)w=c$. 
Thus $\lambda(\sigma)c\equiv0$ for all $\lambda(\xi)\in\syz R(\xi)$, and by 
Theorem~\ref{thm:syzygy-feasibility}, $c\in\Image\,R(\sigma)$.
\end{proof}

\subsection{Proofs}  \label{ssec:proofs}

\begin{proof}[Proof of Theorem~\ref{thm:imp rep}]
${(\Leftarrow):}$ Assume $R(\xi)$ defines a kernel representation for $\B_\textup{dif}$ as in~\eqref{eq:linear_kernel_representation}. Fix ${w_1,w_2\in\B}$ and define $$w_\textup{dif} :=w_1-w_2.$$
     By construction, $w_\textup{dif} \in\B_\textup{dif}.$ Moreover, we have 
     $$R(\sigma)w_1 = R(\sigma)w_\textup{dif}+R(\sigma)w_2 = R(\sigma)w_2. $$ 
     Since $w_1,w_2 \in \B$ are arbitrary and $\B$ is time-invariant, we may choose ${w_2= \sigma^{k} w_1}$, with $k\in\Z$. This yields
     $$R(\sigma)w_1= \sigma^k R(\sigma)w_1, \quad \forall \, k\in\Z, $$ 
     which implies that ${R(\sigma)w_1= c}$ for some constant $c\in\mathbb{R}^p$. For this choice of $c$, we conclude that 
     $$\B \subseteq\{ w \mid R(\sigma) w = c \}.$$ 
     Now let $w_3$ be such that $R(\sigma) w_3 = c $. Then 
     $$w_3-w_1\in \ker R(\sigma) =\B_\textup{dif},$$ 
     and, hence, $w_3 \in \B$. This proves the inclusion 
     $$\{ w \mid R(\sigma) w = c \}\subseteq \B$$ 
     and, hence, ~\eqref{eq:affine_kernel_representation} holds. 
    
${(\Rightarrow):}$ Let $c\in\mathbb{R}^p$ and let $R(\xi)$ be such that \eqref{eq:affine_kernel_representation} holds. Let $w_\textup{dif}\in\B_\textup{dif}$, that is, $w_\textup{dif}=w_1-w_2$ with $w_1,w_2\in\B$. Then 
$$R(\sigma)w= R(\sigma)(w_1-w_2) = c-c = 0,$$ 
and, hence, 
$$\B_\textup{dif}\subseteq\{ w \mid R(\sigma) w =0 \}.$$
Now take $w$ such that $R(\sigma)w= 0$. For any $w_1\in\B$, we have
$$R(\sigma)w+w_1 = R(\sigma)w_1 = c.$$ 
Thus, $w+w_1\in \B$ and $w_1 \in \B$. Consequently, $w\in \B_\textup{dif}$. Then 
$$\{ w \mid R(\sigma) w =0 \}\subseteq \B_\textup{dif},$$ which concludes the proof. 
\end{proof}

\begin{proof}[Proof of Theorem~\ref{thm:io}] Let $\wo\in\B$. By Theorem~\ref{thm:lin+offset}, we infer
$$\pi_u \B = \pi_u \B_\textup{dif} + \pi_u \wo
$$
and
$$
\pi_y \B = \pi_y \B_\textup{dif} + \pi_y \wo.$$ 
Note that $u$ is free for $\B$ if and only if 
$$\pi_u \B_\textup{dif} + \pi_u \wo=(\mathbb{R}^{m})^\Z.$$ 
Since $\pi_u \B_\textup{dif}$ is a subspace, and $\pi_u \wo$ is a sequence, this holds if and only if 
$$\pi_u \B_\textup{dif}=(\mathbb{R}^{m})^\Z.$$ 
Thus, $u$ is free for $\B$ if and only if it is free for $\B_\textup{dif}$, from which we  conclude the claim. \end{proof}

\begin{proof}[Proof of Theorem~\ref{thm:affine states vs linear states}]
$(\Leftarrow)$ Let $w_1\in\B$. By time-invariance, $\sigma w_1\in\B$. Then 
$$\hat{w} := w_1-\sigma w_1 \in \mathcal{B}_{\textup{dif}}.$$ 
By \eqref{eq:linear state}, there exists $\hat{x}$ such that 
$$\sigma \hat{x} = A\hat{x} +B\hat{u} \ \text{ and } \ \hat{y}= C\hat{x}+D\hat{u}.$$ 
Now fix $x_0\in\mathbb{R}^n$, and define $x_1$ through 
$$\sigma x_1= x_1-\hat{x},$$ with $x_1(0)=x_0$. Then
\beq \nn
\begin{bmatrix} A-\sigma I & B & 0 \\ C & D & -I \end{bmatrix}\begin{bmatrix} x_1 - \sigma x_1 \\ w_1-\sigma w_1 \end{bmatrix} = 0 
\eeq  
\noindent
implies  
\beq \nn
\begin{bmatrix} A-\sigma I & B & 0 \\ C & D & -I \end{bmatrix}\begin{bmatrix} x_1  \\ w_1 \end{bmatrix} = \sigma\begin{bmatrix} A-\sigma I & B & 0 \\ C & D & -I \end{bmatrix}\begin{bmatrix} x_1  \\ w_1 \end{bmatrix} . 
\eeq
\noindent
This shows that the right-hand side is constant in time. As a consequence, there exist $E\in\mathbb{R}^n$ and $F\in\mathbb{R}^p$ such that  
\[\sigma {x}_1 = A{x}_1 +B{u}_1+E \quad \textrm{ and } \quad {y}_1= C{x}_1+D{u}_1+F. \]  
\noindent
Note that $E$ and $F$ depend on the construction above. We will show that this specific choice allows us to prove \eqref{eq:affine_state_space_representation}. We will prove this by mutual inclusion. 

Let $w_2\in\B$, and let $w= w_1-w_2\in\mathcal{B}_{\textup{dif}}$. By \eqref{eq:linear state}, there exists ${x}$ such that 
$$\sigma {x} = A{x} +B{u} \ \text{ and } \ {y}= C{x}+D{u}.$$ 
Now consider the previously constructed element ${x}_1$ and define $${x}_2:={x}_1-x .$$ 
Then 
\[
\sigma {x}_2 = A{x}_2 +B{u}_2+E \quad \textrm{ and } \quad {y}_2= C{x}_2+D{u}_2+F , \] 
which shows the first inclusion towards proving \eqref{eq:affine_state_space_representation}. 

Now consider $w_3, x_3$ such that 
\[\sigma {x}_3 = A{x}_3 +B{u}_3+E \quad \textrm{ and } \quad {y}_3= C{x}_3+D{u}_3+F. \] 
Using \eqref{eq:linear state}, one obtains $w_3-w_1\in \mathcal{B}_{\textup{dif}}$. By Theorem~\ref{thm:lin+offset}, this implies $w_3 \in \B$. Thus, this proves that \eqref{eq:affine_state_space_representation} holds for the choice of $E$ and $F$. 

$(\Rightarrow)$  In order to prove equality in \eqref{eq:linear state}, we will prove mutual inclusion. 

Let $w\in \B_\textup{dif}$. Note that $w=w_1-w_2$ for some $w_1,w_2\in\B$. Using \eqref{eq:affine_state_space_representation}, there exist $x_1,x_2$ such that $\sigma {x}_i = A{x}_i +B{u}_i+E$ and ${y}_i= C{x}_i+D{u}_i+F$. Now let $x:= x_1-x_2$ and note that $\sigma x = Ax+Bu$ and $y=Cx+Du$. 

We turn our attention to the other inclusion. Let $w$ and $x$ be such that $\sigma x = Ax+Bu$ and $y=Cx+Du$. Let $w_1\in \B$. By \eqref{eq:affine_state_space_representation}, there exists $x_1$ such that $\sigma {x}_1 = A{x}_1 +B{u}_1+E$ and ${y}_1= C{x}_1+D{u}_1+F$. Define $w_2:= w_1-w$ and $x_2:=x_1-x$. Clearly $\sigma {x}_2 = A{x}_2 +B{u}_2+E$ and ${y}_2= C{x}_2+D{u}_2+F$, and hence, by \eqref{eq:affine_state_space_representation}, $w_2\in \B$. Now $w= w_2-w_1$, and thus $w\in\B_\textup{dif}$. This, in turn, completes the proof.
\end{proof}

\begin{proof}[Proof of Lemma\ref{lemma:controllability_is_translation-invariant}]
Let $\mathcal{B}$ be controllable and fix ${\wo \in (\mathbb{R}^q)^{\mathbb{Z}}}$. Given any $w_1 + \wo, w_2 + \wo \in \mathcal{B} + \{ \wo \}$ and any $t_0 \in \N$, there exist $w_1, w_2 \in \mathcal{B}$ such that $w_i + \wo \in \mathcal{B} + \{ \wo \}$. By controllability, there exists ${w} \in \mathcal{B}$ and $t_1 \ge t_0$ such that ${w}$ patches $w_1$ and $w_2$ as in~\eqref{eq:cont patch}. Then ${w} + \wo$ patches $w_1 + \wo$ and $w_2 + \wo$ and lies in $\mathcal{B} + \{ \wo \}$. The converse follows by translating with $-\wo$.
\end{proof}

\begin{proof}[Proof of Lemma~\ref{lemma:integer-invariants}] 
First, observe that the input cardinality, state cardinality, and lag of $\B$ and $\B_{\textup{dif}}$ are equal as a consequence of Theorem~\ref{thm:io}, Theorem~\ref{thm:affine states vs linear states}, and Theorem~\ref{thm:imp rep} respectively. Moreover, for all $t \in \N$, we have
$$\dim \B|_{[1,t]} = \dim \B_{\textup{dif}}|_{[1,t]}.$$
Since $\B_{\textup{dif}}\in\L^q$, the claim follows directly from arguments analogous to those in the proof of~\cite[Theorem 6]{willems1986timea}.
\end{proof} 

\begin{proof}[Proof of Theorem~\ref{thm:fundamental_lemma_state-space_ATI_systems} ] The proof is inspired by that of~\cite[Theorem~1]{vanWaarde2020willems}.

(i). We need to show that the rank condition~\eqref{eq:rank_condition_aff} holds. By contradiction, assume 
\beq \nn
    \rank \, 
    \scalebox{1}{$
    \bma  
    \begin{array}{c}
    H_1(x_d) \\
    H_L(u_d) \\ 
    \mathbb{1}^{\transpose} 
    \end{array}
    \ema < mL + n + 1$} .
\eeq
Then, there exist \(\nu \in \mathbb{R}^{1 \times n}\), \(\eta \in \mathbb{R}^{1 \times mL}\) and \(\epsilon \in \mathbb{R}\) not all zero and such that 
\beq \nn
    \begin{bmatrix} 
    \nu & \eta & \epsilon 
    \end{bmatrix}
    \scalebox{1}{$
    \bma  
    \begin{array}{c}
    H_1(x_d) \\
    H_L(u_d) \\ 
    \mathbb{1}^{\transpose} 
    \end{array}
    \ema = 0$} .
\eeq
Since  
\(u_d \in (\R^m)^{\mathbf{T}}\)
and
\(x_d \in (\R^n)^{\mathbf{T}}\)
is an input/state trajectory of system~\eqref{eq:affine_state_space_representation}, by repeatedly applying the update laws of the system~\eqref{eq:state-space-affine}, we obtain
\begingroup
\setlength{\arraycolsep}{1pt}   
\renewcommand{\arraystretch}{1.15} 
\beq \label{eq:FL_1-a} 
\bma
\begin{array}{c|c}
\begin{array}{cccccc}
\nu & \eta & 0 & \cdots & 0  \\
\nu A & \nu B & \eta & \cdots & 0  \\
\nu A^2 & \nu AB & \nu B & \cdots & 0   \\
\vdots & \vdots & \vdots & \ddots & \vdots  \\
\nu A^n & \nu A^{n-1} B & \nu A^{n-2} B & \cdots & \eta 
\end{array} &
\begin{array}{c}
\lambda
\end{array}
\!
\end{array}
\ema
\!\!
\bma  
    \begin{array}{c}
    H_1(x_d) \\ 
    {H}_{n+L}(u_d)\\  \hline
    \mathbb{1}^{\transpose} 
    \end{array}
\ema
\! = 0,
\eeq
\endgroup
where $\lambda \in \R^{n+1}$ is defined as
\beq \label{eq:FL_3-a}
\lambda =
\begin{bmatrix}
\epsilon \\
\nu E + \epsilon \\
\nu (I + A)E + \epsilon \\
\vdots \\
\nu \biggl( \sum_{i=0}^{n-1} A^i \biggr) E + \epsilon
\end{bmatrix} 
\eeq
Now, let \(\beta = \begin{bmatrix} \beta_0 & \cdots & \beta_n \end{bmatrix}^\top \in \mathbb{R}^{n+1}\), with \(\beta_n = 1\), be the vector of coefficients of the characteristic polynomial of the matrix $A$. 
By the Cayley-Hamilton theorem, we have 
\beq \label{eq:cayley-hamilton}
\beta_0 I + \beta_1 A + \cdots + \beta_n A^n  = 0.
\eeq
Thus, multiplying both sides of~\eqref{eq:FL_1-a} by the row vector $\beta^{\top}$ and invoking~\eqref{eq:FL_3-a} together with~\eqref{eq:cayley-hamilton}, one obtains 
\begingroup
\setlength{\arraycolsep}{2.pt}   
\renewcommand{\arraystretch}{1.15} 
\beq \label{eq:FL_2-a}
\! \! \! \! 
\begin{bmatrix} 0 & \beta_0 \eta + \nu \sum_{i=1}^n \beta_i A^{i-1} B & \cdots & \eta & \beta^\top \lambda \end{bmatrix}
\! 
 \begin{bmatrix} {H}_{n+L}(u_d) \\
    \mathbb{1}^{\transpose}  \end{bmatrix} 
    \! \! = \!  0. \!   \! 
\eeq
\endgroup
Since by assumption the input $u_d$ is persistently exciting of order $n+L$ for the model class $\A$, we have
\beq \label{eq:FL_4-a}
\rank \, 
    \scalebox{1}{$
    \bma  
    \begin{array}{c}
    H_{n+L}(u_d) \\ 
    \mathbb{1}^{\transpose} 
    \end{array}
    \ema = m(n+L) + 1$} .
\eeq
Then \eqref{eq:FL_2-a} and~\eqref{eq:FL_4-a} together imply 
\beq \label{eq:FL_4-b}
\begingroup
\setlength{\arraycolsep}{2.5pt}   
\begin{bmatrix} 0 & \beta_0 \eta + \nu \sum_{i=1}^n \beta_i A^{i-1} B & \cdots & \nu B + \eta  & \eta & \beta^\top \lambda \end{bmatrix} = 0. 
\endgroup
\eeq
Now observe that~\eqref{eq:FL_4-b} immediately implies 
\[
\eta = 0.
\]
Substituting $\eta = 0$ into~\eqref{eq:FL_4-b} yields 
\[
\nu B = 0.
\]
Using this relation, \eqref{eq:FL_4-b} implies 
\[
\nu (\beta_{n-1} B + AB) = 0,
\]
which gives  
\[
\nu AB = 0.
\]
Proceeding recursively, we obtain  
\beq \nn
\nu \begin{bmatrix} B & AB & \cdots & A^{n-1} B \end{bmatrix} = 0.
\eeq
By controllability of the pair $(A,B)$, we conclude that \( \nu = 0 \). Together with~\eqref{eq:FL_3-a} and~\eqref{eq:FL_2-a}, this implies that $\epsilon = 0$. We have thus reached a contradiction, as this would mean that \(\nu \in \mathbb{R}^{1 \times n}\), \(\eta \in \mathbb{R}^{1 \times mL}\), and \(\epsilon \in \mathbb{R}\) are all zero. Therefore, the rank condition \eqref{eq:rank_condition_aff} holds.

(ii). Using the dynamics of the system~\eqref{eq:state-space-affine}, we obtain 
\begin{align*}
    x(2) &= Ax(1) + Bu(1) + E, \\
    x(3) &= A^2 x(1) + ABu(1) + Bu(2) + AE + E, \\
         &~\, \vdots \\
    x(t) &= A^{t-1} x(1) + \sum_{k=0}^{t-2} A^k Bu(t-k-1)  + \sum_{k=0}^{t-2} A^{k} E.
\end{align*} 
and 
\[
y(t)
= C A^{t-1} x(1)
  + \sum_{k=0}^{t-2}\! C A^{k} (B\,u(t-k-1) + E)
  + D u(t) + F .
\]
Consequently, every
trajectory $\left[\begin{smallmatrix} u \\ y\end{smallmatrix}\right] \in \R^{(m+p)\mathbf{L}}$ of
    the system of length \(L\) can be expressed as
\begin{equation}
\begin{bmatrix}
    u \\
    y
\end{bmatrix}
=
\mathsf{M}_L
\begin{bmatrix}\label{eq:every traj is ML times}
    x(1) \\
    u \\
    1
\end{bmatrix}, 
\end{equation}
with $\mathsf{M}_L \in \mathbb{R}^{(m+p)L \times (n +mL+1)}$ defined as
\begingroup
\setlength{\arraycolsep}{2.5pt}   
\renewcommand{\arraystretch}{1.15} 
\begin{equation*} 
\mathsf{M}_L =
\begin{bmatrix}
    \begin{array}{c|c|c}
        0_{mL \times n} & {I}_{mL} & 0_{mL \times 1} \\ 
        \hline
        \mathsf{O}_L(A,C) & \mathsf{T}_L(A,B,C,D) & \mathsf{T}_L(A,E,C,F) \mathbb{1}
    \end{array}
\end{bmatrix}  ,
\end{equation*} 
\endgroup
where $\mathsf{O}_L(A,C) \in \mathbb{R}^{pL \times n} $ and  $\mathsf{T}_L(A,B,C,D) \in \mathbb{R}^{pL \times mL}$
are  the \textit{extended observability matrix} and \textit{convolution matrix}, defined as 
\begin{equation*} 
\mathsf{O}_L(A,C) =
\begin{bmatrix}
    C \\ CA \\ CA^2 \\ \vdots \\ CA^{L-1}
\end{bmatrix} 
\end{equation*} 
and
\begingroup
\setlength{\arraycolsep}{2.pt}   
\renewcommand{\arraystretch}{1.15} 
\begin{equation*} 
\mathsf{T}_L(A,B,C,D) =
\begin{bmatrix}
    D & 0 & 0 & \cdots & 0 \\
    CB & D & 0 & \cdots & 0 \\
    CAB & CB & D & \cdots & 0 \\
    \vdots & \vdots & \vdots & \ddots & \vdots \\
    CA^{L-2}B & CA^{L-3}B & CA^{L-4}B & \cdots & D
\end{bmatrix} ,
\end{equation*}
\endgroup
respectively.

Now recall that, by assumption, the rank condition~\eqref{eq:rank_condition_aff} holds. Then, for every
$x(1)\in\R^n$ and $u\in\R^{mL}$, there exists \( g \in \mathbb{R}^{T - L + 1} \) such that
\begin{equation} \label{eq:every ini in aff span}
\begin{bmatrix}
    x(1) \\
    u \\
    1
\end{bmatrix}
=
\begin{bmatrix}
    H_L(x_d) \\
    H_L(u_d) \\
    \mathbb{1}^\transpose
\end{bmatrix} g
\end{equation}
\noindent
Furthermore, \eqref{eq:every traj is ML times} implies
\begin{equation}\label{eq:Hankels ML times Hankels}
\begin{bmatrix}
    H_L(u_d) \\
    H_L(y_d)
\end{bmatrix}
=
\mathsf{M}_L
\begin{bmatrix}
    H_L(x_d) \\
    H_L(u_d) \\
    \mathbb{1}^\transpose
\end{bmatrix}
\end{equation}
\noindent
Thus, combining~\eqref{eq:every traj is ML times}, ~\eqref{eq:every ini in aff span} and~\eqref{eq:Hankels ML times Hankels}, one obtains that every trajectory ${\left[\begin{smallmatrix} u \\ y\end{smallmatrix}\right] \in \R^{(m+p)\mathbf{L}}}$ of length $L$ can be written as
\[
\begin{bmatrix}
    u \\
    y
\end{bmatrix}
\overset{\eqref{eq:every traj is ML times}}{=}
\mathsf{M}_L
\begin{bmatrix}
    x(1) \\
    u \\
    1
\end{bmatrix}
\overset{\eqref{eq:every ini in aff span}}{=}
\mathsf{M}_L
\begin{bmatrix}
    H_L(x_d) \\
    H_L(u_d) \\
    \mathbb{1}^\transpose
\end{bmatrix} g
\overset{\eqref{eq:Hankels ML times Hankels}}{=}
\begin{bmatrix}
    H_L(u_d) \\
    H_L(y_d)
\end{bmatrix} g,
\]
with \( g \in \mathbb{R}^{T-L+1} \) such that \( \mathbb{1}^{\transpose} g = 1 \). 
\end{proof}

\begin{proof}[Proof of Theorem~\ref{thm:fundamental_lemma_ATI_systems}]
By Theorem~\ref{thm:affine states vs linear states}, $\mathcal{B}$ admits a minimal state space representation~\eqref{eq:state-space-affine}, that is, there exist matrices $A\in \mathbb{R}^{n\times n}, B\in\mathbb{R}^{n\times m}, C\in \mathbb{R}^{p\times n}, D \in \mathbb{R}^{p\times m}$, and vectors $E\in\mathbb{R}^n$, and $F\in\mathbb{R}^p$, with $m=\mathbf{m}(\B)$, $n=\mathbf{n}(\B)$, and $p=\mathbf{p}(\B)$ such that \eqref{eq:affine_state_space_representation} holds. Furthermore, there exists a state trajectory $x_d\in \mathbb{R}^{n\mathbf{T}}$ corresponding to the sequence $\left[\begin{smallmatrix} u_d \\ y_d\end{smallmatrix}\right]$. From this point onward, the argument of the proof follows that of Theorem~\ref{thm:fundamental_lemma_state-space_ATI_systems}.
\end{proof}

\begin{proof}[Proof of Lemma~\ref{lemma:PE-relationship}]
The first claim is immediate, since $H_L(u)$ is a submatrix of 
$$
\bma 
\begin{array}{c}
H_L(u) \\ \mathbb{1}^\top 
\end{array}
\ema ,
$$
and thus has full row rank whenever $u$ is persistently exciting of order $L$ for $\A$.  For the second claim, assume $u$ is persistently exciting of order $L+1$ for $\L$, that is,
\[
   \rank\, H_{L+1}(u) = m(L+1).
\]
Then $\mathbb{1}\notin \, \Image H_L(u)^\top$ in view of~\cite[Proposition.~5]{martinelli2022data}. Then
\[
\rank \, \bma \begin{array}{c} H_{L+1}(u) \\ \mathbb{1}^{\transpose} \end{array} \ema = m(L+1)+1 . \qedhere
\]
\end{proof}

\bibliographystyle{IEEEtran}
\bibliography{refs}

\end{document}